 \newtheorem{thm}{Theorem}
 \newtheorem{cor}[thm]{Corollary}
 \newtheorem{lem}[thm]{Lemma}
 \newtheorem{prop}[thm]{Proposition}
 \theoremstyle{definition}
 \newtheorem{defn}[thm]{Definition}
 \theoremstyle{remark}
 \newcommand{\norm}[1]{\left\Vert#1\right\Vert}
 \newcommand{\C}{\mathbb{C}}
\begin{document}

\title[]
 {Reducing Subspaces on the Annulus}

\author{Ronald G. Douglas and Yun-Su Kim.  }

\address{Department of Mathematics, Texas AM University, College Station,
TX 77843-3368} \email{rdouglas@math.tamu.edu }

\address{Department of Mathematics, University of Toledo, Toledo,
OH 43606-3390} \email{Yun-Su.Kim@utoledo.edu}

\keywords{Bergman Spaces, Bilateral Weighted Shifts, Hardy Spaces,
Reducing Subspaces, MSC(2000) 47A15, 47B37, 47B38, 51D25.
\newline Research was partially supported by a grant from the National
Science Foundation.
 }

\dedicatory{}

%\commby{Daniel J. Rudolph}

%%% ----------------------------------------------------------------------

\newpage
%%% ----------------------------------------------------------------------

\begin{abstract}We study reducing subspaces for an analytic multiplication operator $M_{z^{n}}$
on the Bergman space $L_{a}^{2}(A_{r})$ of the annulus $A_{r}$,
and we prove that $M_{z^{n}}$ has exactly $2^n$ reducing
subspaces. Furthermore, in contrast to what happens for the disk,
the same is true for the Hardy space on the annulus. Finally, we
extend the results to certain bilateral weighted shifts, and
interpret the results in the context of complex geometry.

\end{abstract}

%%% ----------------------------------------------------------------------
\maketitle
%%% ----------------------------------------------------------------------

%\section*{Introduction}
%A partially ordered set is said to be a \emph{lattice} if any two
%elements \textbf{M} and \textbf{N} of it have a least upper bound
%or supremum denoted by $\textbf{M}\vee{\textbf{N}}$ and a greatest
%lower bound or infimum denoted by $\textbf{M}\cap\textbf{N}$.

%The author would like to express her gratitude to her thesis
%advisor, Professor Hari Bercovici.\vskip5cm

%\newpage
%%% ----------------------------------------------------------------------

%%% ----------------------------------------------------------------------
%%% ----------------------------------------------------------------------
%\centerline{\textbf{CHAPTER 1}}
%\vskip0.5cm\centerline{\textbf{Shift Operators}}

%\vskip0.5cm
%%% ----------------------------------------------------------------------
%\goodbreak
\section{Introduction}
Important themes in operator theory are determining invariant
subspaces and reducing subspaces for concretely defined operators.
Our goal in this note is to determine the reducing subspaces for a
power of certain multiplication operators on natural Hilbert
spaces of holomorphic functions on an annulus.

We begin with the Bergman space and Hardy space. Next, we consider
a generalization to certain bilateral weighted shifts. Finally, we
interpret our results in the context of complex geometry
describing another approach to these questions.

The motivation for these questions arises from some earlier
results of K. Zhu (\cite{Z}), M. Stessin and  K. Zhu (\cite{S}),
and other researchers (\cite{B}, \cite{N}). In these studies, the
annulus is replaced by the open unit disk, and one considers
$M_{z^n}$ on the Hardy space $H^2$ or the Bergman space
$L_{a}^{2}$. In particular, the lattice of reducing subspaces of
the $n$th power of the multiplication operator, $M_{z^n}$ on
$L^{2}_{a}$, was shown to be discrete and have precisely $2^n$
elements. This contrasted with the case of the classic Toeplitz
operator $T_{z^n}$ on the Hardy space $H^{2}$ for which this
lattice is infinite and isomorphic to the lattice of all subspaces
of ${\mathbb{C}}^n$. Thus, as is true for many other questions,
the situations on the
 unit disk and annulus are different.

For $0<r<1$, let $A_{r}$ denote the annulus $\{z\in\C:r<|z|<1\}$
in the complex plane $\C$. Let $L^{2}(A_{r})$ denote the usual
$L^2$-space for planar Lebesgue measure on $A_r$ and
$L_{a}^{2}(A_{r})$ be the closure of $R(A_{r})$ in $L^{2}(A_{r})$,
where $R(A_{r})$ is the space of all rational functions with poles
outside the closure of $A_r$.

We let $P_{L_{a}^{2}(A_{r})}$ be the orthogonal projection of
$L^{2}(A_{r})$ onto the Bergman space $L_{a}^{2}(A_{r})$.

For $\varphi$ in $H^{\infty}(A_{r})$, the space of bounded
holomorphic functions on $A_r$, define the operator $M_{\varphi}$
on $L_{a}^{2}(A_{r})$ so that
\[M_{\varphi}(f)=\varphi{f}\]
for $f$ in $L_{a}^{2}(A_{r})$. We are concerned with determining
the reducing subspaces of $M_{z^{n}}=M_{z}^{n}$ for $n\geq{2}$.

\section{Reducing Subspaces for
$M_{z^{n}}(n\geq{2})$}\label{28}

We let $\mathcal{S}_{k}$ denote the subspaces of
$L^{2}_{a}(A_{r})$ generated by
$\{z^{m}\in{L^{2}_{a}(A_{r})}:m=k(\texttt{mod }n)\}$ for
$0\leq{k}<n$. To study reducing subspaces for the multiplication
operator $M_{z^{n}}$ on $L^{2}_{a}(A_{r})$, we will use these $n$
reducing subspaces $\mathcal{S}_{k}(0\leq{k}<n)$ for $M_{z^{n}}$.
Note that
\[L^{2}_{a}(A_{r})=\mathcal{S}_{0}\oplus{\mathcal{S}_{1}}\oplus\cdot\cdot\cdot\oplus{\mathcal{S}_{n-1}},\]
and so for any $f\in{L^{2}_{a}(A_{r})}$, we have a unique
orthogonal decomposition
\begin{equation}\label{11}f=f_{0}+f_{1}+\cdot\cdot\cdot+f_{n-1},\end{equation}
where $f_{k}\in{\mathcal{S}_{k}}(0\leq{k}<n)$.

In this section, we will need the following well known fact
\cite{DA}. For completeness, we provide a proof.

\begin{lem}\label{17}
  If $M_{F}:\mathcal{S}_{k}\rightarrow{L^{2}_{a}(A_{r})}$ is a (bounded)
multiplication operator by a function $F$ on $A_r$, then $F
\in{H^{\infty}(A_{r})}$ and $\norm{F}_{\infty}\leq\norm{M_{F}}$.

\end{lem}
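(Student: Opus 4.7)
The plan is to carry out the standard two-step multiplier argument. First, I would establish that $F$ is holomorphic on $A_r$ by exhibiting a nowhere-vanishing element of $\mathcal{S}_k$, namely $z^k$ (or the constant function $1$ if $k=0$): since $M_F$ maps into $L^2_a(A_r)$, the image $M_F(z^k)=Fz^k$ is a holomorphic function on $A_r$, and dividing by the nowhere-vanishing $z^k$ exhibits $F$ as a holomorphic function on $A_r$.

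For the norm estimate $\|F\|_\infty\le\|M_F\|$, I would use reproducing kernels together with the adjoint of $M_F$. Both $\mathcal{S}_k$ (a closed subspace of the Bergman space) and $L^2_a(A_r)$ are reproducing kernel Hilbert spaces of holomorphic functions on $A_r$; denote their reproducing kernels at $w\in A_r$ by $K_k(\cdot,w)$ and $K(\cdot,w)$ respectively. The reproducing property gives, for every $f\in\mathcal{S}_k$,
\[
\langle f,M_F^*K(\cdot,w)\rangle=\langle M_F f,K(\cdot,w)\rangle=F(w)f(w)=F(w)\langle f,K_k(\cdot,w)\rangle,
\]
which yields the adjoint identity $M_F^*K(\cdot,w)=\overline{F(w)}\,K_k(\cdot,w)$. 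Taking norms and using $\|M_F^*\|=\|M_F\|$ produces the pointwise estimate $|F(w)|\,K_k(w,w)^{1/2}\le\|M_F\|\,K(w,w)^{1/2}$ for every $w\in A_r$.

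The main obstacle is converting this kernel estimate into the sharp bound $|F(w)|\le\|M_F\|$. Since $L^2_a(A_r)=\mathcal{S}_0\oplus\cdots\oplus\mathcal{S}_{n-1}$ we have $K(w,w)=\sum_j K_j(w,w)\ge K_k(w,w)$, so the naive reading of the kernel inequality only controls $|F(w)|$ by a constant multiple of $\|M_F\|$ (with the multiplier depending on the kernel ratio at $w$). Closing the argument requires a finer analysis: working with the explicit Laurent expansion of the Bergman kernel of the annulus in the orthogonal basis $\{z^m\}_{m\in\mathbb{Z}}$, combined with the subharmonicity of $|F|^2$ and a suitable limit as $w$ tends to one of the two boundary components of $A_r$ where the asymptotics of $K(w,w)$ and $K_k(w,w)$ are governed by the same tail of the series, one controls the ratio $K(w,w)/K_k(w,w)$ and extracts the desired inequality. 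This kernel analysis is the technical heart of the proof.
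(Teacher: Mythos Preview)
Your holomorphy argument and your adjoint-on-kernels computation are exactly the paper's proof, just in dual language: the paper uses the evaluation functional $\lambda_z$, writes $|F(z)\lambda_z(f_k)|\le\|\lambda_z\|\,\|M_F\|\,\|f_k\|$ for $f_k\in\mathcal{S}_k$, and then asserts ``It follows that $|F(z)|\,\|\lambda_z\|\le\|\lambda_z\|\,\|M_F\|$'' and cancels. You are right to flag this step: taking the supremum over $f_k\in\mathcal{S}_k$ only produces $\|\lambda_z|_{\mathcal{S}_k}\|=K_k(z,z)^{1/2}$ on the left, not $\|\lambda_z\|=K(z,z)^{1/2}$, so the paper glosses over the very point you isolate.

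Your proposed repair, however, does not deliver the sharp constant. With $\|z^m\|^2=\pi(1-r^{2m+2})/(m+1)$ one finds, as $|w|\to 1^-$,
\[
K(w,w)\sim\frac{1}{\pi(1-|w|^2)^2},\qquad K_k(w,w)\sim\frac{1}{n\,\pi(1-|w|^2)^2},
\]
since each of the $n$ residue classes contributes an equal share of the divergent tail; the same limiting ratio $n$ appears as $|w|\to r^+$. Hence $K(w,w)/K_k(w,w)\to n$, not $1$, at both boundary components, and your kernel inequality together with subharmonicity of $|F|^2$ yields only $\|F\|_\infty\le\sqrt{n}\,\|M_F\|$. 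This is enough to conclude $F\in H^\infty(A_r)$ --- which is in fact all that is used downstream (see the last line of the proof of Proposition~\ref{90}) --- but it does not give the sharp inequality stated in the lemma. The sharp bound is true, but it needs a different device than boundary kernel asymptotics.
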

\begin{proof}First, since $F$ is the quotient of two analytic
functions ($F=(M_{F}z^{k})/z^{k}$), it is meromorphic on $A_r$.
For a fixed $z\in{A_{r}}$, let $\lambda_{z}$ denote the
point-evaluation functional on $L^{2}_{a}(A_{r})$ defined by
\[\lambda_{z}(f)=f(z)\]
for $f\in{L^{2}_{a}(A_{r})}$. Clearly, $\lambda_{z}$ is bounded,
and for $f_{k}\in{\mathcal{S}_{k}}$,
\[|F(z)\lambda_{z}(f_{k})|=|F(z)f_{k}(z)|=|\lambda_{z}(M_{F}(f_{k}))|\leq{\norm{\lambda_{z}}\norm{M_{F}}
\norm{f_{k}}}.\] It follows that
$|F(z)|\norm{\lambda_{z}}\leq\norm{\lambda_{z}}\norm{M_{F}}$ for
any $z\in{A_{r}}$. Therefore, \[|F(z)|\leq\norm{M_{F}}\] for any
$z\in{A_{r}}$, and $F$ is analytic.

\end{proof}
Another familiar result classifies bilateral shifts up to
unitarily equivalence.

\begin{prop}\label{33}\cite{C}
If $S$, $T$ are two bilateral weighted shifts with weight
sequences $\{v_{m}\}$, $\{w_{m}\}$, and if there exists an integer
$k$ such that
\[|v_{m}|=|w_{m+k}|\texttt{ for all }m,\]
then $S$ and $T$ are unitarily equivalent. Moreover, the converse
is true.
\end{prop}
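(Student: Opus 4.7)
The plan is to prove the two implications separately. Fix orthonormal bases so that $Se_m = v_m e_{m+1}$ and $Tf_m = w_m f_{m+1}$ for $m \in \mathbb{Z}$.

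For the sufficiency, I would construct an intertwining unitary $U$ explicitly by setting $Uf_m = \lambda_m e_{m-k}$ for unimodular scalars $\lambda_m$ to be determined. Imposing $SU = UT$ on each $f_m$ yields $\lambda_m v_{m-k} e_{m-k+1} = w_m \lambda_{m+1} e_{m-k+1}$, hence the recursion $\lambda_{m+1} w_m = \lambda_m v_{m-k}$. The hypothesis $|v_j| = |w_{j+k}|$ applied at $j = m-k$ reads $|v_{m-k}| = |w_m|$, so starting from $\lambda_0 = 1$ and solving recursively produces unimodular $\lambda_m$ throughout and hence a unitary $U$ satisfying $SU = UT$. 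At indices with $w_m = 0$, the hypothesis forces $v_{m-k} = 0$ as well, and any unimodular choice of $\lambda_{m+1}$ works.

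For the converse, suppose $SU = UT$ for some unitary $U$. Taking adjoints gives $U^*S^* = T^*U^*$, and multiplying yields $U^*S^*S\,U = T^*T$, so the positive diagonal operators $S^*S$ (with $S^*Se_m = |v_m|^2 e_m$) and $T^*T$ are unitarily equivalent. In the generic case when the $|v_m|$ are pairwise distinct, each eigenspace of these operators is one-dimensional, so $Uf_m = \alpha_m e_{\sigma(m)}$ for unimodular $\alpha_m$ and a bijection $\sigma \colon \mathbb{Z} \to \mathbb{Z}$ with $|v_{\sigma(m)}| = |w_m|$. Applying $SU = UT$ to $f_m$ then gives $v_{\sigma(m)} \alpha_m e_{\sigma(m)+1} = w_m \alpha_{m+1} e_{\sigma(m+1)}$, forcing $\sigma(m+1) = \sigma(m) + 1$ (when the weights are nonzero) and therefore $\sigma(m) = m + k_0$ for a fixed integer $k_0$; the integer $k = -k_0$ then satisfies $|v_m| = |w_{m+k}|$.

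The main obstacle is the converse in the non-generic case where the moduli $|v_m|$ repeat: the spectral subspaces of $S^*S$ are then higher-dimensional, and $U$ need not send each $f_m$ to a scalar multiple of a single $e_n$. A way forward is to exploit the commuting family of diagonal operators $(S^\ell)^* S^\ell$, whose diagonal entries $\norm{S^\ell e_m}^2 = |v_m|^2 |v_{m+1}|^2 \cdots |v_{m+\ell-1}|^2$ vary with $\ell$; their joint spectrum encodes the sequence $\{|v_m|\}$ up to a global index shift, so matching this invariant for $S$ and $T$ produces the integer $k$. A separate subtlety is the possibility of zero weights, which decomposes each shift into smaller (possibly unilateral or finite) invariant pieces that a unitary equivalence must pair up; the preceding analysis then applies to each matched block.
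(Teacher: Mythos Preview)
The paper does not supply its own proof of this proposition; it is quoted from Shields's survey on weighted shifts and simply cited as \cite{C}, so there is no in-paper argument to compare against.

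Your sufficiency argument is correct and is the standard construction. For the converse, your treatment of the generic case (pairwise distinct $|v_m|$) is complete and correct. You rightly flag the non-generic case as the real difficulty, and your proposed remedy---passing to the whole commuting family $(S^\ell)^*S^\ell$ and $S^\ell(S^\ell)^*$, whose diagonal entries are the consecutive products $|v_m\cdots v_{m+\ell-1}|^2$ and $|v_{m-\ell}\cdots v_{m-1}|^2$---is the right idea. Since the same unitary $U$ conjugates all of these operators simultaneously, each $f_m$ must land in a joint eigenspace for the $S$-family whose joint eigenvalue list matches that of $f_m$ for the $T$-family; that list records all the moduli $|v_j|$ on both sides of the image index and hence pins down the shift $k$. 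What you have written is a sound outline rather than a finished proof; Shields makes essentially this argument precise via the auxiliary sequence $\beta(m)=v_0v_1\cdots v_{m-1}$ (extended to negative $m$). Note also that the standard formulation, and every application in this paper (Lemma~\ref{39}, Proposition~\ref{8}, Proposition~\ref{75}), assumes nonzero weights, so the zero-weight decomposition issue you raise does not actually arise here; indeed the weights in those applications are strictly positive and pairwise distinct, so your generic-case argument already covers everything the paper needs.
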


\begin{lem}\label{39}
For $i$, $j$ such that $0\leq{i}\neq{j}<n$, if
$M_{i}=M_{z^{n}}|\mathcal{S}_{i}$ and
$M_{j}=M_{z^{n}}|\mathcal{S}_{j}$, then $M_i$ and $M_j$ are not
unitarily equivalent.
\end{lem}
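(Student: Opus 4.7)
The plan is to apply Proposition~\ref{33} by realizing each $M_k$ as a bilateral weighted shift and ruling out the possibility that the weight sequences coincide up to an index shift. I will first note that the monomials $\{z^{k+mn} : m \in \mathbb{Z}\}$ form an orthogonal basis of $\mathcal{S}_k$---immediate from the rotational invariance of planar Lebesgue measure on $A_r$---so in the corresponding orthonormal basis, $M_k$ acts as the bilateral weighted shift with weights $w_m^{(k)} = \|z^{k+(m+1)n}\|/\|z^{k+mn}\|$. If $M_i$ and $M_j$ were unitarily equivalent, Proposition~\ref{33} would produce an integer $\ell$ with $w_m^{(i)} = w_{m+\ell}^{(j)}$ for every $m \in \mathbb{Z}$; cross-multiplying, the ratio $\|z^{i+mn}\|/\|z^{j+(m+\ell)n}\|$ agrees at $m$ and $m+1$, and is therefore independent of $m$.

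Next, using polar coordinates I will record the explicit Bergman norm $\|z^p\|^2 = \pi(1-r^{2p+2})/(p+1)$, valid for $p \neq -1$. Since $\|z^p\|^2 \sim \pi/(p+1)$ as $p \to +\infty$, letting $m \to +\infty$ along the progression identifies the constant ratio above as $1$, so $\|z^{i+mn}\|^2 = \|z^{j+(m+\ell)n}\|^2$ for all sufficiently large $m$. Writing $d := (j-i) + \ell n$, which is nonzero because $i \not\equiv j \pmod n$, this becomes the identity
\[\frac{1-r^{2p+2}}{p+1} = \frac{1-r^{2p+2d+2}}{p+d+1}\]
for every sufficiently large positive $p$ with $p \equiv i \pmod n$.

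Finally, clearing denominators yields
\[d = (p+d+1)\,r^{2p+2} - (p+1)\,r^{2p+2d+2},\]
whose right-hand side decays exponentially in $p$ while the left-hand side is the fixed nonzero integer $d$---contradiction. The main obstacle I anticipate is cleanly extracting the value of the constant ratio (namely, that it is exactly $1$) from the mere assumption that it is constant; this is secured by the large-$m$ asymptotics. A convenient by-product of working in that regime is that the anomalous basis element $z^{-1}$, relevant only when $k=n-1$, never enters the calculation.
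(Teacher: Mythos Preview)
Your argument is correct and follows the same overall route as the paper: realize each $M_k$ as a bilateral weighted shift with weights $\|z^{(m+1)n+k}\|/\|z^{mn+k}\|$ and then invoke Proposition~\ref{33}. The paper's own proof is considerably more terse---after recording the weights it simply writes down the Bergman norm formula $\|z^{p}\|^{2}=(1-r^{2(p+1)})/(p+1)$ and asserts that by Proposition~\ref{33} the operators are not unitarily equivalent, without spelling out \emph{why} no index shift can match the two weight sequences. Your telescoping step (the ratio $\|z^{i+mn}\|/\|z^{j+(m+\ell)n}\|$ is independent of $m$), your use of the large-$m$ asymptotics to pin down that ratio as $1$, and the final exponential-decay contradiction constitute exactly the missing verification; they also neatly sidestep the exceptional exponent $p=-1$. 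So your proof is essentially the paper's approach, but with the key analytic step made explicit rather than left to the reader.
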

\begin{proof}
Let $e_{k}^{i}=\frac{z^{kn+i}}{\norm{z^{kn+i}}}$ where $k$ is an
integer. Then, $\{e_{k}^{i}:k\in{\mathbb{Z}}\}$ is an orthonormal
basis for $\mathcal{S}_{i}$.

First, we calculate the weights of the operator $M_i$. Since
\[M_{i}(e_{k}^{i})=\frac{z^{(k+1)n+i}}{\norm{z^{kn+i}}}=\frac{\norm{z^{(k+1)n+i}}}{\norm{z^{kn+i}}}e_{k+1}^{i},\]
the weights of $M_{i}$ are
\begin{equation}\label{26}\lambda_{k}=\frac{\norm{z^{(k+1)n+i}}}{\norm{z^{kn+i}}}\end{equation} for
$k\in{\mathbb{Z}}$.

Similarly, the weights of $M_j$ are
\begin{equation}\label{27}\mu_{k}=\frac{\norm{z^{(k+1)n+j}}}{\norm{z^{kn+j}}}\end{equation} for
$k\in{\mathbb{Z}}$.

Since $\norm{z^{n}}^{2}=\frac{1}{n+1}-\frac{r^{2(n+1)}}{n+1}$, by
Proposition \ref{33} we conclude that $M_{i}$ and $M_{j}$ are not
unitarily equivalent.

\end{proof}

Recall that determining the reducing subspaces of $M_{z^{n}}$ is
equivalent to finding the projections in the commutant of
$M_{z^{n}}$ (\cite{D}). Thus, in the following Proposition, we
characterize every bounded linear operator $T$ on
$L^{2}_{a}(A_{r})$ commuting with $M_{z^{n}}$.

\begin{prop}\label{90}
A bounded linear operator $T$ on $L^{2}_{a}(A_{r})$ commutes with
$M_{z^{n}}$ if and only if there are functions $F_{i}(0\leq{i}<n)$
in $H^{\infty}(A_{r})$ such that
\[Tf=\sum_{i=0}^{n-1}{F_{i}f_{i}},\]
where $f_{i}(0\leq{i}<n)$ denotes the functions in equation
(\ref{11}).

\end{prop}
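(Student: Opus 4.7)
The plan is to address the two directions separately. The ``if'' direction is essentially a computation using the invariance of each $\mathcal{S}_i$ under $M_{z^n}$. The ``only if'' direction is the substantive content, and the key observation is that $M_{z^n}$ is \emph{invertible} on $L^{2}_{a}(A_{r})$, which lets us parametrize each commuting $T$ by its action on the single monomial $z^{i}$ in each sector.

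For the ``if'' direction, suppose $Tf = \sum_{i=0}^{n-1} F_{i} f_{i}$ with $F_{i}\in H^{\infty}(A_{r})$. Multiplication by $z^{n}$ preserves the residue class $m\pmod{n}$, so $z^{n} f_{i}\in \mathcal{S}_{i}$ and $(M_{z^{n}}f)_{i} = z^{n} f_{i}$. Hence both $M_{z^{n}} T f$ and $T M_{z^{n}} f$ equal $\sum_{i} F_{i}(z^{n} f_{i})$. Boundedness of $T$ is immediate from $\max_{i}\|F_{i}\|_{\infty}<\infty$ and the orthogonal expansion $\|f\|^{2}=\sum \|f_{i}\|^{2}$.

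For the ``only if'' direction, I would first note that $z^{n}$ is bounded away from zero on $A_{r}$, so $z^{-n}\in H^{\infty}(A_{r})$ and $M_{z^{n}}$ is invertible on $L^{2}_{a}(A_{r})$ with inverse $M_{z^{-n}}$. Consequently a commuting $T$ commutes with $M_{z^{n}}^{k}=M_{z^{kn}}$ for every $k\in\mathbb{Z}$. Applying this to $z^{i}\in\mathcal{S}_{i}$ gives
\[
T(z^{kn+i}) \;=\; z^{kn}\cdot T(z^{i}) \qquad (k\in\mathbb{Z}).
\]
Since elements of $L^{2}_{a}(A_{r})$ are holomorphic on $A_{r}$ and $z^{i}$ has no zeros there, one may define $F_{i}:=T(z^{i})/z^{i}$ as a holomorphic function on $A_{r}$. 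The displayed identity then reads $T(z^{kn+i}) = F_{i}\cdot z^{kn+i}$, so $T$ and the (a priori formal) multiplication operator $M_{F_{i}}$ agree on the spanning set $\{z^{kn+i}:k\in\mathbb{Z}\}$ of $\mathcal{S}_{i}$. Extending by linearity and using the continuity of $T$ together with Lemma~\ref{17}, one obtains $T|_{\mathcal{S}_{i}}=M_{F_{i}}|_{\mathcal{S}_{i}}$ as bounded operators from $\mathcal{S}_{i}$ into $L^{2}_{a}(A_{r})$, and Lemma~\ref{17} simultaneously forces $F_{i}\in H^{\infty}(A_{r})$. Decomposing an arbitrary $f=f_{0}+\cdots+f_{n-1}$ and summing the sector-wise identities yields $Tf=\sum F_{i}f_{i}$.

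I expect the main obstacle to be the careful passage from equality on the dense monomial set to equality of bounded operators, which hinges on knowing a priori that $F_{i}$ extends to a bounded multiplier; this is exactly what Lemma~\ref{17} supplies once the formal identification of $T|_{\mathcal{S}_{i}}$ with $M_{F_{i}}$ is in place. A silent but crucial prerequisite is the invertibility of $M_{z^{n}}$: without it one would only control $T(z^{kn+i})$ for $k\ge 0$ and could not conclude that a single multiplier handles all of $\mathcal{S}_{i}$, which also contains negative powers of $z$.
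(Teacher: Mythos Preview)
Your argument is correct, but it is genuinely different from the paper's. The paper proves the forward direction via the reproducing kernel: since $T^{*}$ commutes with $M_{\lambda^{n}-z^{n}}^{*}$, one has $T^{*}k_{\lambda}\in\ker M_{\lambda^{n}-z^{n}}^{*}=\mathrm{span}\{k_{\lambda\omega_{k}}:0\le k<n\}$ with $\omega_{k}=\exp(2\pi i k/n)$, giving $Tf(z)=\sum_{k}a_{k}(z)f(z\omega_{k})$; the identities $f_{i}(z\omega_{k})=\omega_{k}^{i}f_{i}(z)$ then produce $F_{i}(z)=\sum_{k}a_{k}(z)\omega_{k}^{i}=T(z^{i})/z^{i}$, and Lemma~\ref{17} finishes. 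Your route instead uses that $M_{z^{n}}$ is invertible on $L^{2}_{a}(A_{r})$, so $T$ commutes with all integer powers $M_{z^{kn}}$ and each $\mathcal{S}_{i}$ is a cyclic module with generator $z^{i}$; this immediately yields $T(z^{kn+i})=F_{i}\,z^{kn+i}$ and, after a pointwise-convergence argument plus Lemma~\ref{17}, the result. Your approach is more elementary and closer in spirit to the weighted-shift argument of Section~4; the paper's kernel approach, while heavier, yields the intermediate formula $Tf(z)=\sum_{k}a_{k}(z)f(z\omega_{k})$ connecting $T$ to the rotation action, which is what feeds into the complex-geometric viewpoint of Section~6. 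One small remark on your write-up: the density step should be made explicit via continuity of point evaluations on $L^{2}_{a}(A_{r})$---from $p_{m}\to g$ in norm one gets $p_{m}(z)\to g(z)$ and $T p_{m}(z)\to Tg(z)$, whence $Tg(z)=F_{i}(z)g(z)$---so that $M_{F_{i}}|_{\mathcal{S}_{i}}$ is seen to be bounded \emph{before} Lemma~\ref{17} is invoked, rather than ``together with'' it.
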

\begin{proof}
$(\Leftarrow)$ Let
$M_{F_{i}}:L^{2}_{a}(A_{r})\rightarrow{L^{2}_{a}(A_{r})}$ be the
multiplication operator defined by $M_{F_{i}}(g)=F_{i}g$ for
$g\in{L^{2}_{a}(A_{r})}$. Then,
\begin{equation}\label{12}\sup_{\norm{f}=1}\norm{Tf}\leq\sum_{i=0}^{n-1}\norm{F_{i}f_{i}}
\leq(\sum_{i=0}^{n-1}\norm{M_{F_{i}}})(\sup_{i=0,\cdot\cdot,n-1}\norm{f_{i}})\leq
(\sum_{i=0}^{n-1}\norm{M_{F_{i}}})\norm{f}.\end{equation} It
follows that $\norm{T}<\infty$. Clearly, $TM_{z^{n}}=M_{z^{n}}T$.
\vskip0.2cm

$(\Rightarrow)$ Assume that $T$ is a (bounded) operator on
$L^{2}_{a}(A_{r})$ such that $TM_{z^{n}}=M_{z^{n}}T$. Then,
$T^{*}$ commutes with $M_{\lambda^{n}-z^{n}}^{*}$ for any
$\lambda\in{A_{r}}$. Clearly, for $\lambda\in{A_{r}}$,
$\ker{M_{\lambda^{n}-z^{n}}^{*}}$ is generated by
\[\{k_{\lambda{\omega_{k}}}:\omega_{k}=\texttt{exp}(2\pi{i}k/n)(0\leq{k}<n)\},\]
where $k_{\lambda{\omega_{k}}}$ is the Bergman kernel function at
$\lambda{\omega_{k}}$.
%that is,
%$M_{z}^{*}k_{\lambda{\omega_{k}}}=\overline{\lambda{\omega_{k}}}k_{\lambda{\omega_{k}}}$.

Since $T^{*}k_{\lambda}\in{\ker{M_{\lambda^{n}-z^{n}}^{*}}}$, we
have
\begin{equation}\label{1}T^{*}k_{\lambda}=\sum_{k=0}^{n-1}\overline{a_{k}(\lambda)}k_{\lambda{\omega_{k}}},\end{equation}
for uniquely determined complex numbers
$\{a_{k}(\lambda)\}_{k=0}^{n-1}$.

If $f\in{L^{2}_{a}(A)}$ and $z\in{A_{r}}$, then, by equation
(\ref{1}),
\begin{equation}\label{14}Tf(z)=(Tf,k_{z})=(f,T^{*}k_{z})=\sum_{k=0}^{n-1}a_{k}(z)f(z\omega_{k}).\end{equation}
Since $\omega_{k}^{n}=1$ for any $0\leq{k}<n$,
\begin{equation}\label{15}f_{i}(z\omega_{k})=\omega_{k}^{i}f_{i}(z)\texttt{ }
(0\leq{i,k}<n),\end{equation} where $f_{i}$ $(0\leq{i}<n)$ is the
function defined in equation (\ref{11}).

Since $\sum_{k=0}^{n-1}a_{k}(z)f(z\omega_{k})=
\sum_{k=0}^{n-1}a_{k}(z)f_{0}(z\omega_{k})+\sum_{k=0}^{n-1}a_{k}(z)f_{1}(z\omega_{k})+\cdot\cdot\cdot+
\sum_{k=0}^{n-1}a_{k}(z)f_{n-1}(z\omega_{k})$, (\ref{14}) and
(\ref{15}) imply that
\begin{equation}\label{16}Tf(z)=\sum_{k=0}^{n-1}a_{k}(z)f_{0}(z)+\sum_{k=0}^{n-1}a_{k}(z)\omega_{k}f_{1}(z)
+\cdot\cdot\cdot+\sum_{k=0}^{n-1}a_{k}(z)\omega_{k}^{n-1}f_{n-1}(z).
\end{equation}

For $0\leq{k}<n$, a function $F_k$ on $A_{r}$ is defined by
\begin{equation}\label{20}F_{k}(z)=\sum_{i=0}^{n-1}a_{i}(z)\omega_{i}^{k}.\end{equation} Then,
equation (\ref{16}) implies that
\[Tf=\sum_{i=0}^{n-1}{F_{i}f_{i}}.\]

To finish this proof, we have to show that $F_{i}(0\leq{i}<n)$ is
in $H^{\infty}(A_{r})$. Since $F_{k}(z)=\frac{T(z^{k})}{z^{k}}$
for $0\leq{k}<n$, $F_{k}$ is analytic on $A_r$.

By Lemma \ref{17}, $\norm{F_{k}}_{\infty}\leq\norm{M_{F_{k}}}$;
that is, $\norm{F_{k}}_{\infty}<\infty$ for any $0\leq{k}<n$.

\end{proof}
%\section{}\label{72}
%\subsection{\textbf{Modular Lattice.}}
An analogous result is known for Toeplitz operators on the open
unit disk \cite{C}.

Because $M_{z^{n}}$ and
$M_{z^{n}}|\mathcal{S}_{0}\oplus{M_{z^{n}}|\mathcal{S}_{1}}\oplus\cdot\cdot\oplus{M_{z^{n}}|\mathcal{S}_{n-1}}$
are unitarily equivalent, in the following Proposition, we
determine the projections in the commutant of
$M_{z^{n}}|\mathcal{S}_{0}\oplus{M_{z^{n}}|\mathcal{S}_{1}}\oplus\cdot\cdot\oplus{M_{z^{n}}|\mathcal{S}_{n-1}}$.

\begin{prop}\label{18}
 For $0\leq{k}<n$, let
$M_{k}=M_{z^{n}}|\mathcal{S}_{k}$.

If $B=(B_{ij})_{(n\times{n})}$ is a projection such that
\begin{equation}\label{21}
\begin{pmatrix}
M_{0}&0&\cdot\cdot\cdot&0&0\\0&M_{1}&0&\cdot\cdot&0\\\cdot&\cdot&\cdot&\cdot&\cdot\\0&0&
\cdot\cdot\cdot&0&M_{n-1}\end{pmatrix}B=B\begin{pmatrix}
M_{0}&0&\cdot\cdot\cdot&0&0\\0&M_{1}&0&\cdot\cdot&0\\\cdot&\cdot&\cdot&\cdot&\cdot\\0&0&
\cdot\cdot\cdot&0&M_{n-1}\end{pmatrix},\end{equation} then there
are holomorphic functions $\varphi_{ij}(0\leq{i,j}<n)$ in
$H^{\infty}(A_{r})$ such that
\[B_{ij}=M_{\varphi_{ij}}.\]

Moreover, $\varphi_{ii}$ is a real-valued constant function on
$A_r$ for $0\leq{i}<n$, and $\varphi_{ij}\equiv{0}$ for
$i\neq{j}$.
\end{prop}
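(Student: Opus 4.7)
The plan is to reduce to Proposition \ref{90}, then exploit self-adjointness of the projection together with the asymptotics of $\norm{z^{p}}$ on $A_{r}$ to kill the off-diagonal blocks and collapse the diagonal blocks to real scalars.

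First, I will verify that each $B_{ij}$ is a multiplication operator. Under the identification $L^{2}_{a}(A_{r}) = \mathcal{S}_{0} \oplus \cdots \oplus \mathcal{S}_{n-1}$, the block-diagonal operator appearing in (\ref{21}) is $M_{z^{n}}$ itself, so (\ref{21}) says precisely that $B M_{z^{n}} = M_{z^{n}} B$. Proposition \ref{90} therefore supplies $F_{0}, \ldots, F_{n-1} \in H^{\infty}(A_{r})$ with $B f = \sum_{j} F_{j} f_{j}$. Feeding $B$ a single component $f_{j} \in \mathcal{S}_{j}$ and reading off the $\mathcal{S}_{i}$-part of the output identifies $B_{ij} f_{j}$ with $\varphi_{ij} f_{j}$, where $\varphi_{ij}$ is the Laurent sub-series of $F_{j}$ supported on exponents $\equiv i - j \pmod{n}$; Lemma \ref{17} then guarantees $\varphi_{ij} \in H^{\infty}(A_{r})$.

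Next I will use $B^{*} = B$. In the orthonormal basis $e_{k}^{i} = z^{kn+i}/\norm{z^{kn+i}}$, write
\[
\varphi_{ij}(z) = \sum_{m} a_{m}\, z^{mn + (i-j)}, \qquad \varphi_{ji}(z) = \sum_{m} b_{m}\, z^{mn + (j-i)}.
\]
A short computation using orthogonality of distinct integer powers on $A_{r}$ turns the identity $B_{ji}^{*} = B_{ij}$ into the scalar equation
\[
a_{m}\, \norm{z^{(k+m)n + i}}^{2} = \overline{b_{-m}}\, \norm{z^{kn + j}}^{2} \qquad (k \in \mathbb{Z}).
\]
Consequently, whenever $a_{m}$ or $b_{-m}$ is nonzero, the ratio $\norm{z^{(k+m)n+i}}^{2}/\norm{z^{kn+j}}^{2}$ must be independent of $k$.

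The heart of the argument is the asymptotic analysis of $\norm{z^{p}}^{2} = \pi(1 - r^{2p+2})/(p+1)$ (for $p \neq -1$). The ratio above tends to $1$ as $k \to +\infty$, but to $r^{2(mn + i - j)}$ as $k \to -\infty$, the latter limit being furnished by the inner boundary $|z| = r$. Equality of the two limits forces $mn + i - j = 0$, which with $0 \leq i, j < n$ is possible only when $m = 0$ and $i = j$. Hence $\varphi_{ij} \equiv 0$ whenever $i \neq j$, while $\varphi_{ii}$ collapses to its constant term $a_{0}$; the $m = 0$ equation then reads $a_{0} = \overline{a_{0}}$, making this constant real. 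I expect the principal obstacle to be verifying that these two asymptotic limits genuinely differ in every case other than $m = 0$ with $i = j$ --- this is precisely where the annular geometry $0 < r < 1$ enters in an essential way, and is the feature absent in the disk case.
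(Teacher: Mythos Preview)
Your argument is correct, but it takes a genuinely different route from the paper's for the core step. The paper splits the work in two. For the diagonal it observes that $B_{ii}^{*}=B_{ii}$ makes $M_{\varphi_{ii}}$ self-adjoint on $\mathcal{S}_{i}$; point evaluations then exhibit $\overline{\varphi_{ii}(z)}$ as eigenvalues, so $\varphi_{ii}$ is real-valued holomorphic and hence constant by the open mapping theorem. For the off-diagonal it argues that if $\varphi_{lk}\neq 0$ then $M_{\varphi_{lk}}M_{\varphi_{lk}}^{*}$ is a positive self-adjoint operator commuting with $M_{l}$, hence (by the same reasoning) a positive scalar, so that a scalar multiple of $M_{\varphi_{lk}}$ is a unitary intertwining $M_{k}$ and $M_{l}$---contradicting Lemma~\ref{39}. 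Your approach instead treats all blocks uniformly: from $B_{ji}^{*}=B_{ij}$ you extract the coefficient identities $a_{m}\norm{z^{(k+m)n+i}}^{2}=\overline{b_{-m}}\norm{z^{kn+j}}^{2}$, and the two limits $k\to+\infty$ and $k\to-\infty$ (governed by the outer and inner boundary circles respectively) force $r^{2(mn+i-j)}=1$. This is more hands-on but entirely self-contained: it bypasses Proposition~\ref{33} and Lemma~\ref{39} and makes the role of the annular geometry $0<r<1$ completely explicit. The paper's argument, by contrast, packages the obstruction as the reusable non-equivalence Lemma~\ref{39}, which is what lets the same proof template carry over verbatim to the Hardy space (Proposition~\ref{31}) and the abstract weighted-shift setting (Proposition~\ref{40}).
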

\begin{proof}
Since the operator $B$ commutes with $M_{z^{n}}$, by Proposition
\ref{90} and equation (\ref{20}), we have
\[Bf=\sum_{i=0}^{n-1}\varphi_{i}f_{i},\]
where $\varphi_{i}(z)=\sum_{k=0}^{n-1}a_{k}(z)\omega_{k}^{i}$ and
hence $\varphi_{i}\in{H^{\infty}(A_{r})}$.

Let $a_{k}(z)=\sum_{i=0}^{n-1}a_{ki}(z)$ where
$a_{ki}\in{\mathcal{S}_{i}}(i=0,1,\cdot\cdot\cdot,n-1)$. Then,
\[B=\begin{pmatrix}
\sum_{k=0}^{n-1}a_{k0}&\sum_{k=0}^{n-1}a_{k(n-1)}\omega_{k}&\cdot\cdot\cdot&\sum_{k=0}^{n-1}a_{k1}\omega_{k}^{n-1}
\\\sum_{k=0}^{n-1}a_{k1}&\sum_{k=0}^{n-1}a_{k0}\omega_{k}&\cdot\cdot&\sum_{k=0}^{n-1}a_{k2}\omega_{k}^{n-1}
\\\cdot&\cdot&\cdot\cdot\cdot&\cdot
\\\cdot&\cdot&\cdot\cdot\cdot&\cdot
\\\sum_{k=0}^{n-1}a_{k(n-1)}&\sum_{k=0}^{n-1}a_{k(n-2)}\omega_{k}&\cdot\cdot\cdot&
\sum_{k=0}^{n-1}a_{k0}\omega_{k}^{n-1}\end{pmatrix}.\] It follows
that
\[B=(M_{\varphi_{ij}})_{i,j=0}^{n-1},\]
where
$M_{\varphi_{ij}}:\mathcal{S}_{j}\rightarrow{\mathcal{S}_{i}}$ is
the multiplication operator defined by
\[M_{\varphi_{ij}}(f_{j})=\varphi_{ij}f_{j}\]
for $f_{j}\in{\mathcal{S}_{j}}$. By Lemma \ref{17},
$\varphi_{ij}(0\leq{i.j}<n)$ is in $H^{\infty}(A_{r})$.

Since
$M_{\varphi_{ii}}:\mathcal{S}_{i}\rightarrow{\mathcal{S}_{i}}$ and
$B $ is a projection, $M_{\varphi_{ii}}^{*}=M_{\varphi_{ii}}$.
Thus, $\varphi_{ii}$ is a real-valued holomorphic function and
hence $\varphi_{ii}$ is a constant function.

We now prove that $\varphi_{ij}=0$ if $i\neq{j}$. Suppose that
there are $l$ and $k$ in $\{0,1,\cdot\cdot\cdot,n-1\}$ such that
$l\neq{k}$ and $\varphi_{lk}\neq{0}$. By equation (10),
\begin{equation}\label{22}
M_{l}M_{\varphi_{lk}}=M_{\varphi_{lk}}M_{k}\texttt{ and }
M_{k}M_{\varphi_{kl}}=M_{\varphi_{kl}}M_{l}.
\end{equation}
Thus, equation (\ref{22}) implies that
\begin{equation}\label{23}M_{l}M_{\varphi_{lk}}M_{\varphi_{kl}}=
M_{\varphi_{lk}}M_{k}M_{\varphi_{kl}}=M_{\varphi_{lk}}M_{\varphi_{kl}}M_{l}.\end{equation}

Since $M_{\varphi_{kl}}=M_{\varphi_{lk}}^{*}$,
$M_{\varphi_{lk}}M_{\varphi_{kl}}:\mathcal{S}_{l}\rightarrow{\mathcal{S}_{l}}$
is a self-adjoint operator commuting with $M_{l}$.
 Then, in the same way as for $\varphi_{ii}$, we conclude that
$M_{\varphi_{lk}}M_{\varphi_{kl}}=M_{\varphi_{lk}\varphi_{kl}}$ is
a constant multiple of the identity operator; that is,
\begin{equation}\label{24}M_{\varphi_{lk}}M_{\varphi_{kl}}=c_{l}I_{\mathcal{S}_{l}}\texttt{ for }0\leq{l}<n,\end{equation} where
$I_{\mathcal{S}_{l}}$ is the identity operator on $\mathcal{S}_l$.
Note that $c_{l}>0$, since $M_{\varphi_{lk}}M_{\varphi_{kl}}$ is
positive and $\varphi_{lk}\neq{0}$.

Equations (\ref{22}) and (\ref{24}) imply that $M_{k}$ and $M_{l}$
are unitarily equivalent which is a contradiction by Lemma
\ref{39}.

%Therefore, $\varphi_{lk}=0$ if $l\neq{k}$.
\end{proof}

Finally, it is time to determine the reducing subspaces of the
multiplication operators $M_{z^{n}}(n\geq{2})$.

\begin{thm}\label{91}
For a given $n\geq{2}$, the multiplication operator
$M_{z^{n}}:L^{2}_{a}(A_{r})\rightarrow{L^{2}_{a}(A_{r})}$ has
$2^n$ reducing subspaces with minimal reducing subspaces
$\mathcal{S}_{0},\cdot\cdot\cdot ,\mathcal{S}_{n-1}$.

\end{thm}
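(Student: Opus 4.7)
The plan is to deduce Theorem \ref{91} directly from Proposition \ref{18} by using the standard correspondence between reducing subspaces and projections in the commutant, and then counting the projections that arise.

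First, I would recall (as noted in the text before Proposition \ref{90}) that reducing subspaces of $M_{z^n}$ are in bijection with self-adjoint projections in the commutant of $M_{z^n}$. Since $M_{z^n}$ on $L^2_a(A_r)$ is unitarily equivalent to $M_0 \oplus M_1 \oplus \cdots \oplus M_{n-1}$ via the decomposition in (\ref{11}), it suffices to count self-adjoint projections $B = (B_{ij})_{i,j=0}^{n-1}$ satisfying (\ref{21}).

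Next I would invoke Proposition \ref{18}, which forces such a $B$ to have the form $B = \mathrm{diag}(c_0 I_{\mathcal{S}_0}, c_1 I_{\mathcal{S}_1}, \ldots, c_{n-1} I_{\mathcal{S}_{n-1}})$ with each $c_i \in \mathbb{R}$. The projection condition $B^2 = B$ immediately gives $c_i^2 = c_i$, so each $c_i \in \{0, 1\}$. Hence the projections in the commutant are in bijection with subsets of $\{0, 1, \ldots, n-1\}$, yielding exactly $2^n$ projections and therefore exactly $2^n$ reducing subspaces. Explicitly, the reducing subspace associated with a subset $S \subseteq \{0,1,\ldots,n-1\}$ is $\bigoplus_{k \in S} \mathcal{S}_k$.

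Finally, the minimal reducing subspaces correspond to singleton subsets $S = \{k\}$, yielding precisely $\mathcal{S}_0, \mathcal{S}_1, \ldots, \mathcal{S}_{n-1}$. (Each $\mathcal{S}_k$ is genuinely reducing because it is the range of one of the projections just identified, and it is minimal since any further nonzero reducing subspace of $M_k = M_{z^n}|\mathcal{S}_k$ would produce a nontrivial projection in the commutant of $M_k$ alone, contradicting the already established diagonal-with-scalars structure applied to the summand $\mathcal{S}_k$.) There is essentially no main obstacle remaining: the entire work has been absorbed into Proposition \ref{18} and Lemma \ref{39}, and the theorem is a counting consequence. The only subtlety worth writing carefully is the minimality claim, for which I would appeal once more to Proposition \ref{18} restricted to the single summand $\mathcal{S}_k$ to rule out proper reducing subspaces inside $\mathcal{S}_k$.
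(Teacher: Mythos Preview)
Your proposal is correct and follows essentially the same route as the paper: reduce to projections in the commutant via the decomposition (\ref{11}), apply Proposition \ref{18} to force $B=\mathrm{diag}(c_0,\ldots,c_{n-1})$, use $B^2=B$ to get $c_i\in\{0,1\}$, and count. Your explicit justification of minimality is slightly more detailed than what the paper writes, but the argument is the same in spirit.
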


\begin{proof}
Since $M_{z^{n}}$ and
$M_{z^{n}}|\mathcal{S}_{0}\oplus{M_{z^{n}}|\mathcal{S}_{1}}\oplus\cdot\cdot\oplus{M_{z^{n}}|\mathcal{S}_{n-1}}$
are unitarily equivalent, it is enough to consider the reducing
subspaces of
$M_{z^{n}}|\mathcal{S}_{0}\oplus{M_{z^{n}}|\mathcal{S}_{1}}\oplus\cdot\cdot\oplus{M_{z^{n}}|\mathcal{S}_{n-1}}$.

By Proposition \ref{18}, if $B=(B_{ij})_{(n\times{n})}$ is a
projection satisfying equation (10), then
\[B=\begin{pmatrix}
c_{0}&0&\cdot\cdot\cdot&0
\\0&c_{1}&\cdot\cdot&0
\\\cdot&\cdot&\cdot\cdot\cdot&\cdot
\\\cdot&\cdot&\cdot\cdot\cdot&\cdot
\\0&0&\cdot\cdot\cdot&
c_{n-1}\end{pmatrix},\] where $c_{i}(0\leq{i}<n)$ are real
numbers. Since $B^{2}=B$, it follows that $c_{i}=0,1$ for
$0\leq{i}<n$.

Therefore, the reducing subspaces of
$M_{z^{n}}|\mathcal{S}_{0}\oplus{M_{z^{n}}|\mathcal{S}_{1}}\oplus\cdot\cdot\oplus{M_{z^{n}}|\mathcal{S}_{n-1}}$
are
\begin{equation}\label{25}c_{0}\mathcal{S}_{0}\oplus{c_{1}\mathcal{S}_{1}}
\oplus\cdot\cdot\cdot\oplus{c_{n-1}\mathcal{S}_{n-1}},\texttt{
with } c_{i}=0,1.\end{equation}
 Thus,
this theorem is proven.

\end{proof}

\section{Reducing Subspaces for $T_{z^{n}}$}\label{43}

J.A. Ball (\cite{B}) and E. Nordgren (\cite{N}) studied the
problem of determining reducing subspaces for an analytic Toeplitz
operator on the Hardy space $H^2(\mathbb{D})$ of the open unit
disk.

In this section, for $n\geq{2}$, we determine the reducing
subspaces for the analytic Toeplitz operator $T_{z^{n}}$ on the
Hardy space $H^{2}(A_{r})$ of the annulus $A_{r}$. Note that, for
$T_{z^{n}}$ on $H^{2}(\mathbb{D})$, the problem has an easy but
sufficient answer, since $T_{z^{n}}$ and
$T_{z}\otimes{I_{\C^{n}}}$ are unitarily equivalent.

%\subsection{Reducing Subspaces for $T_{z^{2}}$}
Recall that the Hardy space $H^{2}(A_{r})$ is the closure of
$R(A_{r})$ in $L^{2}(m)$, where $m$ is linear Lebesgue measure on
$\partial{A_{r}}$.

Let $\mathbb{S}_{k}$ denote the subspaces of $H^{2}(A_{r})$
generated by $\{z^{m}\in{H^{2}(A_{r})}:m=k(\texttt{mod }n)\}$ for
$0\leq{k}<n$.
 In the same way as in Section
\ref{28}, we will use these $n$ reducing subspaces
$\mathbb{S}_{k}$ for the Toeplitz operator
$T_{z^{n}}:H^{2}(A_{r})\rightarrow{H^{2}}(A_{r})$ defined by
\[T_{z^{n}}(f)=z^{n}f,\]
for $n\geq{2}$. Note that
\[H^{2}(A_{r})=\mathbb{S}_{0}\oplus{\mathbb{S}_{1}}\oplus\cdot\cdot\cdot\oplus{\mathbb{S}_{n-1}},\]
and so for any $f\in{H^{2}(A_{r})}$, we have a unique orthogonal
decomposition
\begin{equation}\label{32}f=f_{0}+f_{1}+\cdot\cdot\cdot+f_{n-1},\end{equation}
where $f_{k}\in{\mathbb{S}_{k}}(0\leq{k}<n)$. \vskip.1in

\begin{prop}\label{8}
For $i$, $j$ such that $0\leq{i}\neq{j}<n$, if
$T_{i}=T_{z^{n}}|\mathbb{S}_{i}$ and
$T_{j}=T_{z^{n}}|\mathbb{S}_{j}$, then $T_i$ and $T_j$ are not
unitarily equivalent.

\end{prop}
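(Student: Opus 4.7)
The plan is to follow the structure of Lemma \ref{39} almost verbatim, replacing the Bergman-space norm with the Hardy-space norm. With respect to the orthonormal basis $e^i_k := z^{kn+i}/\|z^{kn+i}\|_{H^2(A_r)}$ ($k \in \mathbb{Z}$) of $\mathbb{S}_i$, the operator $T_i$ is a bilateral weighted shift whose weights are
\[
\lambda_k \;=\; \frac{\|z^{(k+1)n+i}\|_{H^2(A_r)}}{\|z^{kn+i}\|_{H^2(A_r)}},
\]
and the weights $\mu_k$ of $T_j$ are defined analogously. By Proposition \ref{33}, unitary equivalence of $T_i$ and $T_j$ would require an integer $l$ with $|\lambda_m| = |\mu_{m+l}|$ for every $m \in \mathbb{Z}$, and the goal is to rule this out.

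First I would compute $\|z^m\|_{H^2(A_r)}^2$ explicitly. Since $\partial A_r$ is the disjoint union of $\{|z|=1\}$ and $\{|z|=r\}$, integrating $|z^m|^2$ against arc-length measure gives
\[
\|z^m\|_{H^2(A_r)}^2 \;=\; 2\pi\bigl(1 + r^{2m+1}\bigr).
\]
Setting $x = r^{2n}$, $a = r^{2i+1}$, $b = r^{2j+1}$, the weights assume the compact form
\[
\lambda_k^2 \;=\; \frac{1+x^{k+1}a}{1+x^k a}, \qquad \mu_k^2 \;=\; \frac{1+x^{k+1}b}{1+x^k b}.
\]

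The main step is then to show that the identity $\lambda_m^2 = \mu_{m+l}^2$, required to hold for every $m$, forces $a = x^l b$. A direct cross-multiplication reduces the identity to $(1-x)\,x^m(a - x^l b) = 0$; since $0 < r < 1$ gives $x \neq 1$, this yields $a = x^l b$, equivalently $r^{2i+1} = r^{2j+1+2nl}$, hence $i = j + nl$. But $i, j \in \{0,1,\ldots,n-1\}$ with $i \neq j$ force $0 < |i-j| < n$, so no integer $l$ satisfies the relation, yielding the desired contradiction. The only new ingredient beyond Lemma \ref{39} is the explicit Hardy-space norm formula; once that is in hand the remainder is identical algebra, so there is no serious obstacle, only a bookkeeping check that the rigid exponent-matching argument still produces the same arithmetic constraint $i \equiv j \pmod{n}$.
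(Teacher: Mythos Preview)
Your proposal is correct and follows exactly the paper's approach: compute $\|z^m\|^2_{H^2(A_r)}$ explicitly and then apply Proposition~\ref{33} as in Lemma~\ref{39}, only supplying more of the algebraic detail than the paper does (the paper simply records $\|z^n\|^2 = 1+r^{2n}$ and says ``in the same way as in Lemma~\ref{39}''). One minor remark: the paper's own computation uses normalized angular measure on each boundary circle, giving $\|z^m\|^2 = 1+r^{2m}$, rather than your arc-length value $2\pi(1+r^{2m+1})$; either normalization plugs into the identical cross-multiplication argument and yields the same contradiction $i \equiv j \pmod n$.
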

\begin{proof}
Note that
\[\norm{z^{n}}^{2}_{{H^{2}({A_r}})}=\frac{1}{2\pi}\int_{0}^{2\pi}|e^{in\theta}|^{2}d\theta+
\frac{1}{2\pi}\int_{0}^{2\pi}{r}^{2n}|e^{in\theta}|^{2}d\theta=1+{r}^{2n}.\]

Then, in the same way as in Lemma \ref{39}, the result is proven.

\end{proof}

Determining the reducing subspaces of $T_{z^{n}}$ is equivalent to
finding projections in the commutant of $T_{z^{n}}$. Since
$T_{z^{n}}$ and
$T_{z^{n}}|\mathbb{S}_{0}\oplus{T_{z^{n}}|\mathbb{S}_{1}}\oplus\cdot\cdot\oplus{T_{z^{n}}|\mathbb{S}_{n-1}}$
are unitarily equivalent, we consider the commutant of
$T_{z^{n}}|\mathbb{S}_{0}\oplus{T_{z^{n}}|\mathbb{S}_{1}}\oplus\cdot\cdot\oplus{T_{z^{n}}|\mathbb{S}_{n-1}}$
in the following Proposition.

 Since we also have a kernel function in this case, a
description similar to that of the commutant of $M_{z^n}$ on the
Bergman space $L^{2}_{a}(A_{r})$ is obtained;

%\begin{lem}\label{30}
%  If $T_{G}:\mathbb{S}_{k}\rightarrow{H^{2}(A_{r})}$ is a (bounded)
%Toeplitz operator, then $G \in{H^{\infty}(A_{r})}$ and
%$\norm{G}_{\infty}\leq\norm{T_{G}}$.

%\end{lem}
\begin{prop}\label{19}
A bounded linear operator $T$ on $H^{2}(A_{r})$ commutes with
$T_{z^{n}}$ if and only if there are functions $G_{i}(0\leq{i}<n)$
in $H^{\infty}(A_{r})$ such that
\[Tf=\sum_{i=0}^{n-1}{G_{i}f_{i}},\]
where $f_{i}$ denotes the functions in equation (\ref{32}).
\end{prop}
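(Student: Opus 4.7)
The plan is to imitate the proof of Proposition \ref{90}, replacing the Bergman reproducing kernel on $L^{2}_{a}(A_r)$ by the Hardy reproducing kernel on $H^2(A_r)$. For the $(\Leftarrow)$ direction, $T_{z^n}$ preserves each $\mathbb{S}_i$ because $z^n\cdot z^{kn+i}=z^{(k+1)n+i}$, so $(z^n f)_i = z^n f_i$ and the commutation $T_{z^n}(\sum_i G_i f_i) = \sum_i G_i (z^n f_i) = T(z^n f)$ is immediate; boundedness follows from the estimate
\[\norm{Tf} \le \sum_{i=0}^{n-1} \norm{G_i}_\infty \norm{f_i} \le \Bigl(\sum_{i=0}^{n-1} \norm{G_i}_\infty\Bigr) \norm{f},\]
exactly as in inequality (\ref{12}).

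For the $(\Rightarrow)$ direction, the commutation $T T_{z^n}=T_{z^n} T$ implies that $T^*$ commutes with $T_{\lambda^n - z^n}^* = \bar\lambda^n I - T_{z^n}^*$ for every $\lambda\in A_r$, so $T^*$ maps $\ker T_{\lambda^n - z^n}^*$ into itself. Let $k_\mu$ denote the Hardy reproducing kernel of $H^2(A_r)$ at $\mu$; the standard identity $T_\phi^* k_\mu = \overline{\phi(\mu)}\, k_\mu$ for analytic symbols places each $k_{\lambda\omega_k}$ ($0\le k<n$) in this kernel. I claim that the kernel is exactly the span of these $n$ vectors: the operator $T_{z^n-\lambda^n}$ is injective, bounded below (since $|z^n-\lambda^n|$ is bounded away from $0$ on $\partial A_r$), and has Fredholm index $-n$ on $H^2(A_r)$ (winding number $n$ on the outer circle $|z|=1$ and $0$ on the inner circle $|z|=r$), so $\dim\ker T_{\lambda^n-z^n}^* = n$. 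Writing $T^* k_\lambda = \sum_{k=0}^{n-1} \overline{a_k(\lambda)}\, k_{\lambda\omega_k}$ and applying the reproducing property, $Tf(z) = (f, T^* k_z) = \sum_{k=0}^{n-1} a_k(z) f(z\omega_k)$. Inserting the decomposition $f = \sum_i f_i$ and using the identity $f_i(z\omega_k) = \omega_k^i f_i(z)$ inherited from the cyclic action on $\mathbb{S}_i$ yields $Tf = \sum_{i=0}^{n-1} G_i f_i$ with $G_i(z) = \sum_{k=0}^{n-1} a_k(z)\omega_k^i$. Finally, $G_i = T(z^i)/z^i$ is holomorphic on $A_r$, and the Hardy-space analogue of Lemma \ref{17}, whose proof transcribes verbatim since point evaluation is bounded on $H^2(A_r)$ as well, gives $\norm{G_i}_\infty \le \norm{M_{G_i}} < \infty$.

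The main technical obstacle is the dimension count $\dim\ker T_{\lambda^n-z^n}^* = n$; on the multiply connected annulus it cannot be read off from a simple divisibility argument as in the disk, and instead relies on the Fredholm-index computation above, or alternatively on a direct analysis of the quotient $H^2(A_r)/(z^n-\lambda^n)H^2(A_r)$ via Laurent expansions. Once this is established, the remainder is a straightforward transcription of the proof of Proposition \ref{90}.
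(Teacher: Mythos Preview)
Your proof is correct and follows exactly the approach the paper intends: the paper gives no separate proof for this proposition, merely remarking that ``we also have a kernel function in this case'' so that the argument of Proposition~\ref{90} carries over verbatim. Your Fredholm-index justification of $\dim\ker T_{\lambda^{n}-z^{n}}^{*}=n$ actually supplies more detail than the paper does (which asserts the analogous fact ``clearly'' even in the Bergman case), but the overall route---reproducing kernels, the expansion $T^{*}k_{\lambda}=\sum_{k}\overline{a_{k}(\lambda)}\,k_{\lambda\omega_{k}}$, the identity $f_{i}(z\omega_{k})=\omega_{k}^{i}f_{i}(z)$, and the appeal to the Hardy analogue of Lemma~\ref{17}---is identical.
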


By Proposition \ref{8}, $T_{z^{n}}|\mathbb{S}_{i}$ and
$T_{z^{n}}|\mathbb{S}_{j}$ are not unitarily equivalent where
$0\leq{i}\neq{j}<n$. Thus, in the same way as in Proposition
\ref{18}, we characterize a projection which is in the commutant
of
$T_{z^{n}}|\mathbb{S}_{0}\oplus{T_{z^{n}}|\mathbb{S}_{1}}\oplus\cdot\cdot\oplus{T_{z^{n}}|\mathbb{S}_{n-1}}$;

\begin{prop}\label{31}
 For $0\leq{k}<n$, let
$T_{k}=T_{z^{n}}|\mathbb{S}_{k}$.

If $F=(F_{ij})_{(n\times{n})}$ is a projection such that
\begin{equation}\label{21}\begin{pmatrix}
T_{0}&0&\cdot\cdot\cdot&0&0\\0&T_{1}&0&\cdot\cdot&0\\\cdot&\cdot&\cdot&\cdot&\cdot\\0&0&
\cdot\cdot\cdot&0&T_{n-1}\end{pmatrix}F=F\begin{pmatrix}
T_{0}&0&\cdot\cdot\cdot&0&0\\0&T_{1}&0&\cdot\cdot&0\\\cdot&\cdot&\cdot&\cdot&\cdot\\0&0&
\cdot\cdot\cdot&0&T_{n-1}\end{pmatrix},\end{equation} then there
are holomorphic functions $\varphi_{ij}(0\leq{i,j}<n)$ in
$H^{\infty}(A_{r})$ such that
\[F_{ij}=T_{\varphi_{ij}}.\]

Moreover, $\varphi_{ii}$ is a real-valued constant function on
$A_{r}$ for $0\leq{i}<n$, and $ \varphi_{ij}\equiv{0}$ if
$i\neq{j}$.
\end{prop}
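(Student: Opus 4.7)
The plan is to follow the argument of Proposition \ref{18} essentially verbatim, substituting the Hardy-space ingredients at each step: Proposition \ref{19} plays the role of Proposition \ref{90}, Proposition \ref{8} plays the role of Lemma \ref{39}, and a Hardy-space analog of Lemma \ref{17} (which is available since $H^{2}(A_{r})$ also has bounded point evaluations, so a multiplication operator from $\mathbb{S}_{k}$ into $H^{2}(A_{r})$ must have a symbol in $H^{\infty}(A_{r})$) controls the symbols. The main work is not new estimates but careful bookkeeping.

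First, since $F$ commutes with the diagonal operator in (\ref{21}), in particular it commutes with $T_{z^{n}}$. By Proposition \ref{19} there exist $G_{0},\dots,G_{n-1}\in H^{\infty}(A_{r})$ such that $Ff=\sum_{i}G_{i}f_{i}$. Decomposing each $G_{i}=\sum_{j=0}^{n-1}g_{ij}$ with $g_{ij}\in\mathbb{S}_{j}$ and regrouping in exactly the same way as in Proposition \ref{18} (using that $f_{i}(z\omega_{k})=\omega_{k}^{i}f_{i}(z)$ on the boundary as well), the matrix entries are realized as multiplication operators $F_{ij}=T_{\varphi_{ij}}:\mathbb{S}_{j}\to\mathbb{S}_{i}$. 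The Hardy-space version of Lemma \ref{17} then forces $\varphi_{ij}\in H^{\infty}(A_{r})$.

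Next I handle the diagonal entries. Because $F$ is a projection, each diagonal block $T_{\varphi_{ii}}:\mathbb{S}_{i}\to\mathbb{S}_{i}$ is in particular self-adjoint (it is the compression of the self-adjoint $F$ to $\mathbb{S}_{i}$, and it commutes with $T_{i}$, so its adjoint is another multiplication operator with symbol $\overline{\varphi_{ii}}$ that must agree with $\varphi_{ii}$). Since $\varphi_{ii}$ is simultaneously holomorphic and real-valued on $A_{r}$, it is a real constant.

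Finally I argue that $\varphi_{ij}\equiv 0$ for $i\neq j$. The intertwining relation (\ref{21}) forces $T_{l}T_{\varphi_{lk}}=T_{\varphi_{lk}}T_{k}$ and $T_{k}T_{\varphi_{kl}}=T_{\varphi_{kl}}T_{l}$, and the projection identity $F=F^{*}$ gives $T_{\varphi_{kl}}=T_{\varphi_{lk}}^{*}$. Therefore $T_{\varphi_{lk}}T_{\varphi_{kl}}$ is a self-adjoint multiplication operator on $\mathbb{S}_{l}$ commuting with $T_{l}$, so by the diagonal argument above it is a positive real multiple $c_{l}I_{\mathbb{S}_{l}}$, with $c_{l}>0$ as soon as $\varphi_{lk}\not\equiv 0$. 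Combined with $T_{l}T_{\varphi_{lk}}=T_{\varphi_{lk}}T_{k}$, a suitable scalar multiple of $T_{\varphi_{lk}}$ becomes a unitary implementing a unitary equivalence between $T_{k}$ and $T_{l}$, contradicting Proposition \ref{8}. Hence $\varphi_{lk}\equiv 0$ whenever $l\neq k$.

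The only place where something genuinely must be checked beyond copying Proposition \ref{18} is the Hardy-space analog of Lemma \ref{17}, but the proof there goes through verbatim once one uses the reproducing kernel of $H^{2}(A_{r})$ in place of the Bergman kernel of $L^{2}_{a}(A_{r})$, so I do not expect this to be a real obstacle.
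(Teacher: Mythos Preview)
Your proposal is correct and matches the paper's approach exactly: the paper does not give an independent proof of Proposition~\ref{31} but simply states that, since Proposition~\ref{8} shows the $T_i$ are pairwise non-unitarily-equivalent, the result follows ``in the same way as in Proposition~\ref{18}.'' Your write-up spells out precisely this transfer, with Proposition~\ref{19} in place of Proposition~\ref{90}, Proposition~\ref{8} in place of Lemma~\ref{39}, and the reproducing-kernel argument of Lemma~\ref{17} carried over to $H^{2}(A_{r})$.
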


Since $T_{z^{n}}$ and
$T_{z^{n}}|\mathbb{S}_{0}\oplus{T_{z^{n}}|\mathbb{S}_{1}}\oplus\cdot\cdot\oplus{T_{z^{n}}|\mathbb{S}_{n-1}}$
are unitarily equivalent,
 we have the following result;
\begin{thm}\label{9}
 For a given $n\geq{2}$, the Toeplitz operator
$T_{z^{n}}:H^{2}(A_{r})\rightarrow{H^{2}(A_{r})}$ has $2^n$
reducing subspaces with minimal reducing subspaces
$\mathbb{S}_{0},\cdot\cdot\cdot ,\mathbb{S}_{n-1}$.

\end{thm}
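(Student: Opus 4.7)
The plan is to mirror the proof of Theorem \ref{91} almost verbatim, using the parallel machinery already developed for the Hardy space in Propositions \ref{8}, \ref{19}, and \ref{31}. Since reducing subspaces correspond bijectively to projections in the commutant, and since $T_{z^n}$ is unitarily equivalent to the direct sum $T_{z^n}|\mathbb{S}_0 \oplus T_{z^n}|\mathbb{S}_1 \oplus \cdots \oplus T_{z^n}|\mathbb{S}_{n-1}$, I would reduce the problem to classifying the projections in the commutant of this direct sum.

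First, I would let $F = (F_{ij})_{n\times n}$ be an arbitrary projection commuting with the block diagonal $\operatorname{diag}(T_0,\dots,T_{n-1})$. By Proposition \ref{31}, every such $F$ has the form $F_{ij} = T_{\varphi_{ij}}$ where each $\varphi_{ij} \in H^\infty(A_r)$; moreover, $\varphi_{ij} \equiv 0$ for $i \neq j$ and $\varphi_{ii}$ is a real-valued constant $c_i$ on $A_r$. Therefore $F = \operatorname{diag}(c_0, c_1,\dots,c_{n-1})$ as a multiple of the identity on each block $\mathbb{S}_i$. Next, the projection identity $F^2 = F$ forces $c_i^2 = c_i$, so each $c_i \in \{0,1\}$.

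This shows that the projections in the commutant are precisely the $2^n$ operators indexed by subsets $E \subseteq \{0,1,\dots,n-1\}$, and correspondingly the reducing subspaces of $T_{z^n}$ are exactly
\[
\bigoplus_{i \in E} \mathbb{S}_i, \qquad E \subseteq \{0,1,\dots,n-1\}.
\]
The $n$ singleton choices $E = \{k\}$ give the minimal reducing subspaces $\mathbb{S}_0,\dots,\mathbb{S}_{n-1}$; minimality follows because any proper reducing subspace of $\mathbb{S}_k$ would yield a new projection in the commutant of $T_k$ alone, contradicting the fact that the diagonal block entries $\varphi_{kk}$ must be constants.

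There is no real obstacle here: all the difficult structural work has been absorbed into Propositions \ref{8}, \ref{19}, and \ref{31}. The only thing that one should double check is that the Hardy space setting does in fact give a reproducing kernel (needed to import the argument of Proposition \ref{90} into Proposition \ref{19}) and that the orthogonality of monomials $\{z^m\}_{m\in\mathbb{Z}}$ on $\partial A_r$ yields $\|z^n\|^2_{H^2(A_r)} = 1 + r^{2n}$, a strictly monotonic sequence in $n$, which rules out the unitary equivalence of distinct blocks via Proposition \ref{33}. Once these are in place, the final theorem is essentially a bookkeeping corollary of Proposition \ref{31} together with $F^2 = F$.
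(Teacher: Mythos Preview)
Your proposal is correct and follows precisely the paper's own approach: the paper treats Theorem~\ref{9} as an immediate corollary of Proposition~\ref{31} via the unitary equivalence $T_{z^n}\cong T_0\oplus\cdots\oplus T_{n-1}$, exactly as Theorem~\ref{91} follows from Proposition~\ref{18}. Your additional remarks on the reproducing kernel and the norm computation $\|z^n\|^2=1+r^{2n}$ simply recapitulate the content of Proposition~\ref{8} and the discussion preceding Proposition~\ref{19}, so nothing new is required.
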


\section{Reducing Subspaces for Bilateral Weighted Shifts}
Note that the multiplication operator $M_{z}$ on the Bergman space
$L^{2}_{a}(A_{r})$ and the Toeplitz operator $T_{z}$ on the Hardy
space $H^{2}(A_{r})$ are both bilateral weighted shifts. Moreover,
in Section \ref{28} and Section \ref{43}, we showed that the
lattice of reducing subspaces for the operators
$(M_{z})^{n}(=M_{z^{n}})$ on the Bergman space $L^{2}_{a}(A_{r})$
and $(T_{z})^{n}(=T_{z^{n}})$ on the Hardy space $H^{2}(A_{r})$,
both have $2^{n}$ elements for $n\geq{2}$. Thus, it is natural to
ask the following question.\vskip.1in

 \textbf{Question }: Let $H$ be a separable Hilbert space, and
 $S:H\rightarrow{H}$ be a \emph{bilateral weighted
shift}. Then, for a given $n\geq{2}$, does the operator $S^{n}$
 have a discrete lattice of $2^n$ reducing subspaces?
\vskip.05in

In \cite{S}, Stessin and Zhu answered this question for powers of
unilateral weighted shifts generalizing the earlier results for
$T_z$ on $H^2$ and $M_z$ on $L^{2}_{a}$. That some condition is
necessary is shown by considering the weighted shift with weights
$(\cdot\cdot\cdot, \frac{1}{2}, 2, \frac{1}{2},
2,\cdot\cdot\cdot)$. In this case $T^{2}=I$ and hence the lattice
of reducing subspaces consists of all subspaces.

%2. Let $H$ be a separable Hilbert space, and
% $T:H\rightarrow{H}$ be a \emph{unilateral weighted
%shift} such that $\sigma(T)\subset{\overline{A}}$.

 %Then, for $n\geq{2}$, does the operator $T^{n}$
 %have $2^n$ reducing subspaces?

In this section, we generalize their results finding hypotheses to
answer this question in the affirmative for certain bilateral
weighted shifts with spectrum $A_r$.

Let $\{\beta(m)\}$ be a two-sided sequence of positive numbers
such that
\begin{equation}\label{48}\sup_{m}\lambda_{m}=\sup_{m}\beta(m+1)/\beta(m)<\infty.\end{equation}
 We consider the space of
two-sided sequences $f=\{\hat{f}(m)\}$ such that
\[\norm{f}^{2}=\norm{f}_{\beta}^{2}=\sum|\hat{f}(m)|^{2}(\beta(m))^{2}<\infty.\]
We shall use the notation
\[f(z)=\sum\hat{f}(m)z^{m},\]
whether or not the series converges for any (complex) value of
$z$. We shall denote this space as $L^{2}(\beta)$ for the Laurent
series case.

Recall that these spaces are Hilbert spaces with the inner product
\begin{equation}\label{51}(f,g)=\sum\hat{f}(m)\overline{\hat{g}(m)}(\beta(m))^{2}.\end{equation}

Let $M_{z}:L^{2}(\beta)\rightarrow{L^{2}(\beta)}$ be the linear
transformation defined by
\begin{equation}\label{34}
(M_{z}f)(z)=\sum\hat{f}(m)z^{m+1}.
\end{equation}
By (\ref{48}), $M_z$ is bounded (\cite{C}). (Note that
$\{\lambda_{m}\}$ are the weights.) If $g_{k}(z)=z^{k}$, then
$\{g_{k}\}$ is an orthogonal basis for $L^{2}(\beta)$.
%such that
%\begin{equation}\label{35}\sup_{m\in\{0,-1,-2,\cdot\cdot\cdot\}}\norm{S^{m}}<\infty.\end{equation}

We let $\mathbf{S}_{k}$ denote the subspace of $L^{2}(\beta)$
generated by
\[\{g_{m}\in{L^{2}(\beta)}:m=k(\texttt{mod }n)\}\] for
$0\leq{k}<n$. To study the reducing subspaces for the operator
$M_{z^{n}}:L^{2}(\beta)\rightarrow{L^{2}(\beta)}$ defined by
\begin{equation}\label{2}(M_{z^{n}}f)(z)=\sum\hat{f}(m)z^{m+n}
(f\in{L^{2}(\beta)}),\end{equation} we will use the $n$ reducing
subspaces $\mathrm{\mathbf{S}}_{k}(0\leq{k}<n)$ for $M_{z^{n}}$.
Note that
\[L^{2}(\beta)=\mathbf{S}_{0}\oplus{\mathbf{S}_{1}}\oplus\cdot\cdot\cdot\oplus{\mathbf{S}_{n-1}},\]
and so for any $f\in{L^{2}(\beta)}$, we have a unique orthogonal
decomposition
\begin{equation}\label{70}f=f_{0}+f_{1}+\cdot\cdot\cdot+f_{n-1},\end{equation}
where $f_{k}\in{\mathbf{S}_{k}}(0\leq{k}<n)$.

Consider the multiplication of formal Laurent series, $fg=h$:
\begin{equation}\label{36}
(\sum\hat{f}(m)z^{m})(\sum\hat{g}(m)z^{m})=\sum\hat{h}(m)z^{m},
\end{equation}
where, for all $m$,
\begin{equation}\label{37}\hat{h}(m)=\sum_{k}\hat{f}(k)\hat{g}(m-k).\end{equation} In general, we
will assume that the product (\ref{36}) is defined only if all the
series (\ref{37}) are absolutely convergent. $L^{\infty}(\beta)$
denotes the set of formal Laurent series
$\phi(z)=\sum\hat{\phi}(m)z^{m}(-\infty<m<\infty)$ such that
$\phi{L}^{2}(\beta)\subset{L}^{2}(\beta)$.

If $\phi\in{L^{\infty}(\beta)}$, then the linear transformation of
multiplication by $\phi$ on $L^{2}(\beta)$ will be denoted by
$M_{\phi}$.
\begin{prop}\cite{C}\label{41}
If $A$ is a bounded operator on $L^{2}(\beta)$ that commutes with
$M_{z}$, then $A=M_{\phi}$ for some $\phi\in{L^{\infty}(\beta)}$.

\end{prop}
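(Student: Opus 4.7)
The plan is to set $\phi := A(g_0)$ where $g_0(z) = z^0$; since $A$ is bounded and $g_0 \in L^2(\beta)$, $\phi$ lies in $L^2(\beta)$ and admits a formal Laurent expansion $\phi = \sum_k \hat{\phi}(k) z^k$. The goal is to show that in the orthogonal basis $\{z^m\}_{m \in \mathbb{Z}}$ the matrix coefficients $a_{mn} := \langle A z^n, z^m \rangle / \beta(m)^2$ take the Toeplitz form $a_{mn} = \hat{\phi}(m-n)$, so that $A z^n = z^n \phi$ for every integer $n$; this identity can then be extended by density to conclude $A = M_\phi$ with $\phi \in L^\infty(\beta)$.

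For the matrix computation, expand $A z^n = \sum_m a_{mn} z^m$ in $L^2(\beta)$ and apply the commutation $A M_z = M_z A$ to $z^n$, giving $A z^{n+1} = z \cdot A z^n$. Reading off the $z^m$-coefficient yields the Toeplitz recursion $a_{m, n+1} = a_{m-1, n}$ for all $m, n \in \mathbb{Z}$, so $a_{mn}$ depends only on $m - n$. Since $a_{m, 0} = \hat{\phi}(m)$, one obtains $a_{mn} = \hat{\phi}(m - n)$ and hence $A z^n = z^n \phi$ interpreted as a formal Laurent product. By linearity, $A p = \phi p$ for every finite linear combination of the monomials $\{z^m\}$.

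For a general $f \in L^2(\beta)$, set $f_N := \sum_{|n| \le N} \hat{f}(n) z^n$ so that $f_N \to f$ in norm. Boundedness of $A$ gives $A f = \lim_N A f_N = \lim_N \phi f_N$ in $L^2(\beta)$, so the $m$-th Laurent coefficient of $A f$ equals $\lim_N \sum_{|n| \le N} \hat{\phi}(m - n) \hat{f}(n)$. Combined with a Cauchy--Schwarz estimate based on the uniform bound $\|\phi g\|_\beta \le \|A\| \, \|g\|_\beta$ available on finite trigonometric sums $g$, this should upgrade the conditional convergence to the absolute convergence required by (\ref{37}), so that the formal product $\phi f$ is defined and equal to $A f$. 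Hence $\phi L^2(\beta) \subset L^2(\beta)$, that is, $\phi \in L^\infty(\beta)$, and $A = M_\phi$.

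The Toeplitz identification of the matrix is essentially a one-line consequence of the commutation relation and presents no real obstacle. The main technical difficulty is the last step: the paper's convention (\ref{37}) insists that each convolution sum converge \emph{absolutely}, whereas $L^2$-approximation only supplies conditional convergence of the partial sums. Promoting this to absolute convergence, using the boundedness of $A$ together with the growth properties of the weight sequence $\{\beta(m)\}$, is the delicate part of the argument and will require the most care.
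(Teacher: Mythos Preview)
The paper does not give its own proof of this proposition; it is simply quoted from Shields \cite{C}, so there is nothing in the paper to compare your argument against.  Your outline is precisely the standard argument one finds in Shields: set $\phi=A(g_0)$, use $AM_z=M_zA$ to obtain the Toeplitz relation $a_{m,n+1}=a_{m-1,n}$ and hence $a_{mn}=\hat\phi(m-n)$, then extend from Laurent polynomials to all of $L^2(\beta)$ by density.  That part is correct and routine.

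Where your sketch is genuinely incomplete is exactly where you say it is.  Under the paper's convention (\ref{36})--(\ref{37}) the formal product $\phi f$ is declared to exist only when each coefficient series $\sum_k \hat\phi(k)\hat f(m-k)$ converges \emph{absolutely}, and the density argument you give yields only $\widehat{Af}(m)=\lim_N\sum_{|n|\le N}\hat\phi(m-n)\hat f(n)$, i.e.\ convergence of the symmetric partial sums.  The Cauchy--Schwarz manoeuvre you allude to does not obviously close this gap: from $\|A z^n\|_\beta\le\|A\|\beta(n)$ one extracts $|\hat\phi(k)|\le\|A\|\,\beta(n)/\beta(n+k)$ for every $n$, but this does not directly control $\sum_n |\hat\phi(m-n)||\hat f(n)|$ for general $f\in L^2(\beta)$.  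In Shields' original treatment the difficulty is sidestepped by \emph{defining} $M_\phi$ through its matrix $(\hat\phi(m-n))$ rather than via the formal product, so that boundedness of $A$ is by definition the statement $\phi\in L^\infty(\beta)$; if you adopt the paper's stricter product convention, an additional argument for absolute convergence is required, and you have not supplied one.
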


\begin{prop}\cite{N1}\label{42}
For $\phi\in{L^{\infty}(\beta)}$, $M_\phi$ is a bounded linear
transformation, and the matrix $(a_{mk})$ of $M_\phi$, with
respect to the orthogonal basis $\{g_{k}\}$, is given by
\begin{equation}\label{38}a_{mk}=\hat{\phi}(m-k).\end{equation}

\end{prop}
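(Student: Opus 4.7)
The plan is to prove two assertions in turn: the identification of matrix entries, and the boundedness of $M_\phi$.

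For the matrix entries, I would compute $M_\phi g_k$ directly. Since $g_k(z)=z^k$ and $\phi(z)=\sum_j\hat\phi(j)z^j$, formal multiplication of Laurent series gives $\phi g_k(z)=\sum_m \hat\phi(m-k)z^m$. By hypothesis $\phi g_k\in L^2(\beta)$, so this series is the (unique) expansion of $M_\phi g_k$ in the orthogonal basis $\{g_m\}$, and reading off the coefficient of $g_m$ yields $a_{mk}=\hat\phi(m-k)$.

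For boundedness I would invoke the closed graph theorem. Since $M_\phi$ is defined on all of $L^2(\beta)$ by hypothesis, it suffices to show its graph is closed: assume $f_n\to f$ and $\phi f_n\to g$ in $L^2(\beta)$ and conclude $g=\phi f$. Each Fourier coefficient functional $h\mapsto\hat h(m)$ is bounded (with norm $1/\beta(m)$, by Cauchy--Schwarz against $g_m$), so $\hat f_n(k)\to\hat f(k)$ and $\widehat{\phi f_n}(m)\to\hat g(m)$ for every $k,m$. If I can also show $\widehat{\phi f_n}(m)\to\widehat{\phi f}(m)$ for every $m$, then $\hat g(m)=\widehat{\phi f}(m)$ for all $m$ and uniqueness of Fourier coefficients forces $g=\phi f$.

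The technical heart of the argument, and the step I expect to be the main obstacle, is an auxiliary summability extracted from the definition of $L^\infty(\beta)$: the convention that the product is defined only when each coefficient series is absolutely convergent means $\sum_k|\hat\phi(m-k)\hat h(k)|<\infty$ for every $h\in L^2(\beta)$. Writing $\hat h(k)=a_k/\beta(k)$ with $(a_k)$ an arbitrary $\ell^2$ sequence and using the self-duality of $\ell^2$ yields
\[\sum_k\frac{|\hat\phi(m-k)|^2}{\beta(k)^2}<\infty\quad\text{for each }m.\]
Cauchy--Schwarz then supplies
\[\bigl|\widehat{\phi f_n}(m)-\widehat{\phi f}(m)\bigr|\le\Bigl(\sum_k|\hat\phi(m-k)|^2/\beta(k)^2\Bigr)^{1/2}\|f_n-f\|_\beta,\]
which tends to zero as $n\to\infty$, closing the graph and completing the proof.
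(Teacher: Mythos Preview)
The paper does not supply its own proof of this proposition; it is quoted from the literature (the citation \cite{N1} is to Jewell, and the same fact appears in Shields' survey \cite{C}). So there is no in-paper argument to compare against.

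Your proposal is correct and is essentially the standard proof one finds in those references. The computation of the matrix entries is immediate from the definition of the formal product, and the closed graph theorem is exactly the right tool for boundedness: the hypothesis $\phi L^{2}(\beta)\subset L^{2}(\beta)$ gives an everywhere-defined linear map, coefficient functionals are continuous, and the absolute-convergence convention built into the paper's definition of the product is precisely what lets you pass to the limit coefficientwise. Your extraction of $\sum_{k}|\hat{\phi}(m-k)|^{2}/\beta(k)^{2}<\infty$ from the fact that $\sum_{k}|\hat{\phi}(m-k)\,\hat{h}(k)|<\infty$ for \emph{every} $h\in L^{2}(\beta)$ is the one nonobvious step, and you handle it correctly via $\ell^{2}$ self-duality (equivalently, a second application of the closed graph or uniform boundedness principle on $\ell^{2}$).
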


\begin{prop}\cite{C}\label{75}
The operator $M_z$ on $L^{2}(\beta)$ is unitarily equivalent to
the operator $\widetilde{M}_z$ on $L^{2}(\tilde{\beta})$ if and
only if there is an integer $k$ such that
\[\frac{\beta(n+k+1)}{\beta(n+k)}=\lambda_{n+k}=\tilde{\lambda}_{n}=\frac{\tilde{\beta}(n+1)}{\tilde{\beta}(n)}\] for all $n$. Equivalently,
$L^{2}(\tilde{\beta})=z^{k}L^{2}(\beta)$, and
\begin{equation}\label{53}\norm{f}_{1}={\norm{z^{k}f}}
\texttt{      } (f\in{L^{2}(\tilde{\beta})}),\end{equation} where
$\norm{f}_{1}$ denotes the norm of $f$ in $L^{2}(\tilde{\beta})$.

\end{prop}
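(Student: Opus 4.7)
The plan is to reduce Proposition \ref{75} to the general classification of bilateral weighted shifts supplied by Proposition \ref{33}. The first step is to realize $M_z$ on $L^2(\beta)$ explicitly as a bilateral weighted shift. Normalizing the orthogonal basis $\{g_m\}$ to $e_m = g_m/\beta(m)$, formula (\ref{34}) gives $M_z e_m = (\beta(m+1)/\beta(m))\, e_{m+1} = \lambda_m e_{m+1}$, so the weight sequence of $M_z$ with respect to $\{e_m\}$ is precisely $\{\lambda_m\}$; by the same computation $\widetilde{M}_z$ on $L^2(\tilde\beta)$ is the bilateral weighted shift with weights $\{\tilde\lambda_m\}$.

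With the shift description in hand, the stated biconditional is essentially Proposition \ref{33}: since every $\beta(n)$ and $\tilde\beta(n)$ is positive, the hypothesis $|\lambda_{n+k}| = |\tilde\lambda_n|$ in that proposition coincides with the ratio condition $\beta(n+k+1)/\beta(n+k) = \tilde\beta(n+1)/\tilde\beta(n)$. Both directions of Proposition \ref{75} therefore follow by applying Proposition \ref{33} to the two shifts $M_z$ and $\widetilde{M}_z$; no further argument is needed for the weight-ratio form of the equivalence.

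What remains is the ``equivalently'' clause, which is a bookkeeping calculation. The ratio condition is equivalent to the sequence $n \mapsto \tilde\beta(n)/\beta(n+k)$ being a positive constant; after rescaling $\tilde\beta$ by that constant (which leaves the Hilbert space and the shift $\widetilde{M}_z$ unchanged) one has $\tilde\beta(n) = \beta(n+k)$. Reindexing the defining sum in (\ref{51}) then yields both the set equality $L^2(\tilde\beta) = z^k L^2(\beta)$ and the norm identity $\norm{f}_1 = \norm{z^k f}$ for $f \in L^2(\tilde\beta)$. I anticipate no substantive obstacle: the core of the argument is just Proposition \ref{33}, and the remainder is a careful tracking of the shift index $k$ and a harmless normalization constant.
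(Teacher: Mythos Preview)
The paper does not supply its own proof of Proposition~\ref{75}; it is quoted directly from Shields \cite{C}, as the citation attached to the statement indicates. So there is no in-paper argument to compare against.

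Your reduction to Proposition~\ref{33} is exactly the right move and is correct: once you normalize $\{g_m\}$ to an orthonormal basis, $M_z$ is the bilateral shift with (positive) weights $\lambda_m=\beta(m+1)/\beta(m)$, and likewise for $\widetilde M_z$, so the weight-ratio biconditional is literally Proposition~\ref{33}. One small point on the ``equivalently'' clause: the ratio identity only pins down $\tilde\beta(n)/\beta(n+k)$ up to a positive constant $c$, and for a \emph{fixed} $\tilde\beta$ the norm identity~(\ref{53}) as written requires $c=1$. Your remark that rescaling $\tilde\beta$ leaves $\widetilde M_z$ in the same unitary equivalence class is correct (the rescaled shift has identical weights, so Proposition~\ref{33} with $k=0$ applies), which justifies normalizing to $c=1$; but it would be cleaner to say explicitly that~(\ref{53}) holds up to a positive scalar, or equivalently after this normalization. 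That is the only loose thread; the substance of your argument is sound.
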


\begin{lem}\label{3}

If $\beta_{0}(k)=\beta(nk)$, then $M_{0}=M_{z^{n}}|\mathbf{S}_{0}$
is unitarily equivalent to $M_z$ on $L^{2}(\beta_{0})$.
\end{lem}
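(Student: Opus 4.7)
The plan is to exhibit an explicit unitary $U \colon \mathbf{S}_0 \to L^{2}(\beta_0)$ that intertwines $M_0$ and $M_z$. First I would note that $\{g_{nk}\}_{k\in\mathbb{Z}}$, where $g_{nk}(z) = z^{nk}$, is an orthogonal basis for $\mathbf{S}_0$ (inherited from the orthogonal basis $\{g_m\}$ of $L^{2}(\beta)$), with $\|g_{nk}\|_{\beta} = \beta(nk) = \beta_0(k)$. On the other side, $\{z^{k}\}_{k\in\mathbb{Z}}$ is an orthogonal basis for $L^{2}(\beta_0)$ with $\|z^{k}\|_{\beta_0} = \beta_0(k)$. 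Thus the two bases match in norm under the index correspondence $g_{nk} \leftrightarrow z^{k}$.

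Next I would define $U$ on the basis by $U(g_{nk}) = z^{k}$ and extend linearly. Because $U$ sends an orthogonal basis to an orthogonal basis with the same norms, it extends uniquely to a surjective isometry, i.e.\ a unitary from $\mathbf{S}_0$ onto $L^{2}(\beta_0)$. The intertwining is then a one-line check on basis vectors: using the definition \eqref{2},
\[
U M_0 (g_{nk}) \;=\; U(g_{n(k+1)}) \;=\; z^{k+1} \;=\; M_z(z^{k}) \;=\; M_z U(g_{nk}),
\]
so $U M_0 = M_z U$ on a dense set, hence everywhere. This establishes the unitary equivalence.

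As an alternative, one could appeal directly to Proposition \ref{33} (or to Proposition \ref{75}): in the orthonormal basis $\{g_{nk}/\beta(nk)\}_{k\in\mathbb{Z}}$ of $\mathbf{S}_0$, the bilateral weighted shift $M_0$ has weights
\[
\frac{\beta(n(k+1))}{\beta(nk)} \;=\; \frac{\beta_0(k+1)}{\beta_0(k)},
\]
which are exactly the weights of $M_z$ on $L^{2}(\beta_0)$, so Proposition \ref{33} (with shift $k=0$) yields the unitary equivalence.

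There is no substantive obstacle; the lemma is essentially a re-indexing statement. The only point to verify carefully is that $\mathbf{S}_0$ carries the restricted inner product from $L^{2}(\beta)$ so that $\|g_{nk}\|_{\beta} = \beta(nk)$ really equals $\beta_0(k)$, but this is immediate from the definition of the $L^{2}(\beta)$ norm in \eqref{51}, and the fact that $M_0$ leaves $\mathbf{S}_0$ invariant (which was noted when $\mathbf{S}_0$ was introduced as a reducing subspace for $M_{z^{n}}$).
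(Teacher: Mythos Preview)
Your proposal is correct and follows essentially the same approach as the paper: the paper defines the unitary $T\colon \mathbf{S}_0\to L^{2}(\beta_0)$ by $T(z^{nm})=z^{m}$ (your $U$), checks it is an isometric surjection via the norm identity $\|z^{nm}\|_{\beta}=\beta(nm)=\beta_0(m)$, and verifies $M_zT=TM_0$ on the basis. Your alternative via Proposition~\ref{33}/\ref{75} is a harmless repackaging of the same computation.
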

\begin{proof}
Let $T:\mathbf{S}_{0}\rightarrow{L^{2}(\beta_{0})}$ be the linear
transformation defined by
\begin{equation}\label{49}
T(z^{nm})=z^{m},
\end{equation}
where $m\in\mathbb{Z}$.
%\begin{equation}\label{52}\hat{g}(m)=\hat{f}(nm+i).\end{equation}

If $f\in{\mathbf{S}_{0}}$, then $f(z)=\sum_{m}\hat{f}(nm)z^{nm}$
and, by equation (\ref{51}),
\begin{equation}\label{50}
\norm{f}_{\mathrm{S}_{0}}^{2}=\sum_{m}|\hat{f}(nm)|^{2}(\beta(nm))^{2}
=\sum_{m}|\hat{f}(nm)|^{2}(\beta_{0}(m))^{2}=\norm{Tf}_{L^{2}(\beta_{0})}^{2}.
\end{equation}
Therefore, $T$ is an isometry.
 Clearly, for a given $g=\sum_{m}\hat{g}(m)z^{m}\in{L^{2}(\beta_{0})}$, there is an element
 $f=\sum_{m}\hat{f}(nm)z^{nm}\in\mathbf{S}_{0}$ such that
 $T(f)=g$,
 where $\hat{g}(m)=\hat{f}(nm)$; that is, $T$ is onto. It follows that
$T$ is unitary.

Clearly,
\begin{equation}\label{61}M_{z}T=TM_{0}.\end{equation} Therefore,
$M_{0}=M_{z^{n}}|\mathbf{S}_{0}$ is unitarily equivalent to $M_z$
on $L^{2}(\beta_{0})$.

\end{proof}

%In the same way as in Proposition \ref{3}, we have the following
%result.

%\begin{lem}\label{73}

%If $\beta_{0}(k)=\beta(nk)(k\in{\mathbb{Z}})$, then
%$M_{0}=M_{z^{n}}|\mathbf{S}_{0}$ is unitarily equivalent to $M_z$
%on $L^{2}(\beta_{0})$.
%\end{lem}

We focus on the bilateral shift operator $M_{z}$ on $L^{2}(\beta)$
with monotonically incresing weights $\{\lambda_{n}\}$. If the
weights $\{\lambda_{n}\}$ of $M_{z}$ on $L^{2}(\beta)$ satisfy
\[|\lambda_{n}|\leq{cr^{n}}\texttt{ }for\texttt{ }some\texttt{ }c>0\texttt{ }and\texttt{ }\lim_{n\rightarrow\infty}\lambda_{n}=1,\] then $\sigma(M_{z})$
is the annulus $A_r$ \cite{C}.

We will call such operators $M_z$ a \emph{monotonic-}$A_r$
\emph{weighted shift.}

First, we obtain the analogue of Proposition \ref{90} and
Proposition \ref{18} in the case of monotonic-$A_r$ weighted
shifts.
\begin{prop}\label{40}
 For $0\leq{i}<n$, let $M_{i}=M_{z^{n}}|\mathbf{S}_{i}$, and
 assume that $M_z$ is a monotonic-$A_r$
weighted shift. If $P=(P_{ij})_{(n\times{n})}$ is a projection
such that
\begin{equation}\label{21}
\begin{pmatrix}
M_{0}&0&\cdot\cdot\cdot&0&0\\0&M_{1}&0&\cdot\cdot&0\\\cdot&\cdot&\cdot&\cdot&\cdot\\0&0&
\cdot\cdot\cdot&0&M_{n-1}\end{pmatrix}P=P\begin{pmatrix}
M_{0}&0&\cdot\cdot\cdot&0&0\\0&M_{1}&0&\cdot\cdot&0\\\cdot&\cdot&\cdot&\cdot&\cdot\\0&0&
\cdot\cdot\cdot&0&M_{n-1}\end{pmatrix},\end{equation} then there
are elements $\varphi_{ij}(0\leq{i},j<n)$ in $L^{\infty}(\beta)$
such that
\[P_{ij}=M_{\varphi_{ij}}.\]

Moreover, $\varphi_{ii}$ is a positive constant function for
$0\leq{i}<n$, and $\varphi_{ij}\equiv{0}$ for $i\neq{j}$.
\end{prop}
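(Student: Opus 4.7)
The plan is to follow the template of the proof of Proposition \ref{18}, adapting each ingredient to the abstract bilateral weighted shift setting on $L^{2}(\beta)$. The first step is the analogue of Proposition \ref{90}: every bounded operator on $L^{2}(\beta)$ commuting with $M_{z^{n}}$ has the form $Tf = \sum_{i=0}^{n-1} F_{i} f_{i}$ with $F_{i} \in L^{\infty}(\beta)$. Since there is no reproducing kernel to exploit in this abstract setting, I would work directly with matrix coefficients. Given $P$ commuting with $M_{z^{n}}$, write $P = (P_{ij})$ relative to $L^{2}(\beta) = \bigoplus_{i=0}^{n-1} \mathbf{S}_{i}$, so that each block $P_{ij} : \mathbf{S}_{j} \to \mathbf{S}_{i}$ satisfies $M_{i} P_{ij} = P_{ij} M_{j}$. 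Imitating the proof of Proposition \ref{42}, the matrix coefficients $\langle P_{ij}\, z^{nk+j},\, z^{n\ell + i}\rangle$ depend only on $\ell - k$; reassembling shows that $P_{ij}$ is multiplication by a formal Laurent series $\varphi_{ij}$, which must lie in $L^{\infty}(\beta)$ since $P_{ij}$ is bounded.

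Next I would exploit $P = P^{*} = P^{2}$. On the diagonal, $M_{\varphi_{ii}}$ is a self-adjoint multiplication operator on $\mathbf{S}_{i}$, which via Lemma \ref{3} is identified with $M_{z}$ on $L^{2}(\beta_{i})$ where $\beta_{i}(k) = \beta(nk+i)$. Comparing matrix entries, a multiplication operator $M_{\psi}$ with $\psi = \sum_{k} c_{k} z^{k}$ on $L^{2}(\beta_{i})$ is self-adjoint only if, for each $k \neq 0$, the ratio $\beta_{i}(m+k)/\beta_{i}(m)$ is independent of $m$; equivalently, the weight sequence $\mu_{m}^{(i)} = \beta_{i}(m+1)/\beta_{i}(m)$ is $k$-periodic. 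Strict monotonicity of $\{\lambda_{n}\}$ forces strict monotonicity of $\{\mu_{m}^{(i)}\}$, ruling out every period, so $\psi$ is a scalar; idempotence then pins $\varphi_{ii}$ down to $\{0, 1\}$. For the off-diagonal part, suppose $\varphi_{lk} \not\equiv 0$ for some $l \neq k$. Then $M_{\varphi_{kl}} = M_{\varphi_{lk}}^{*}$, and the product $M_{\varphi_{lk}} M_{\varphi_{kl}}$ is a positive self-adjoint multiplication operator on $\mathbf{S}_{l}$; the same non-periodicity argument forces $M_{\varphi_{lk}} M_{\varphi_{kl}} = c_{l} I_{\mathbf{S}_{l}}$ with $c_{l} > 0$. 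Combined with $M_{l} M_{\varphi_{lk}} = M_{\varphi_{lk}} M_{k}$, the rescaled operator $c_{l}^{-1/2} M_{\varphi_{lk}}$ provides a unitary equivalence between $M_{k}$ and $M_{l}$.

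The main obstacle, playing the role of Lemma \ref{39} here, is ruling out this unitary equivalence in the monotonic-$A_{r}$ setting. By Proposition \ref{33}, $M_{k}$ and $M_{l}$ are unitarily equivalent only if their weight sequences $\mu_{m}^{(k)}$ and $\mu_{m}^{(l)}$ agree in absolute value after some integer translation of $m$. Passing to consecutive ratios reduces this to a rigid periodic identity of the form $\lambda_{nm+n+k}/\lambda_{nm+k} = \lambda_{n(m+p)+n+l}/\lambda_{n(m+p)+l}$ holding for all $m$ and a fixed $p$, and strict monotonicity together with the asymmetric tail asymptotics defining the monotonic-$A_{r}$ class should exclude any such coincidence whenever $0 \leq k \neq l < n$. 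Once this non-equivalence is secured, the off-diagonal $\varphi_{ij}$ must vanish and the diagonal $\varphi_{ii}$ reduce to $\{0, 1\}$-valued constants, completing the proof of Proposition \ref{40}.
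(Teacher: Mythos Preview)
Your proposal is correct and follows essentially the same template as the paper's own proof: identify each block $P_{ij}$ as a multiplication operator (the paper does this by conjugating $\mathbf{S}_i$ to $L^{2}(\beta_i)$ via the unitary $T_i(z^{nm+i})=z^m$ and invoking Proposition~\ref{41}, whereas you read it off directly from the Toeplitz structure of the matrix coefficients), use self-adjointness plus strict monotonicity of the weights to force the diagonal symbols to be real constants, and then argue that any nonzero off-diagonal block would yield a unitary equivalence $M_k\cong M_l$, which is impossible. The only place you hedge unnecessarily is the last step: since $\{\lambda_m\}$ is strictly increasing, the map $s\mapsto\prod_{j=0}^{n-1}\lambda_{s+j}$ is strictly increasing, so the weight sequences $\mu_m^{(k)}$ and $\mu_m^{(l)}$ of $M_k$ and $M_l$ can coincide after an integer shift only if $k-l\in n\mathbb{Z}$, which is excluded for $0\le k\neq l<n$---no appeal to the tail asymptotics is needed.
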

\begin{proof}
%First, we will prove that $\varphi_{ii}$ is a real-valued constant
%function.

%By Proposition \ref{3}, if
%$\beta_{i}(k)=\beta(nk+i)(k=\cdot\cdot\cdot,-2,-1,0,1,2,\cdot\cdot)$,
%then $\mathrm{S}_{i}\cong{L^{2}(\beta_{i})}$ and
%$M_{i}=M_{z^{n}}|\mathrm{S}_{i}$ is unitarily equivalent to the
%linear transformation $M_z$ on $L^{2}(\beta_{i})$.

For a given $0\leq{i}<n$, define a sequence of positive numbers
$\{\beta_{i}(k)\}$ by $\beta_{i}(k)=\beta(nk+i)$ for
$k\in{\mathbb{Z}}$, and let
$T_{i}:\mathbf{S}_{i}\rightarrow{L^{2}(\beta_{i})}$ be the linear
transformation defined by
\begin{equation}\label{49}
T_{i}(z^{nm+i})=z^{m},
\end{equation}
where $m\in{\mathbb{Z}}$.

If $f\in{\mathbf{S}_{i}}$, then
$f(z)=\sum_{m}\hat{f}(nm+i)z^{nm+i}$ and, by equation (\ref{51}),
\[
\norm{T_{i}f}_{L^{2}(\beta_{i})}^{2}=\sum_{m}|\hat{f}(nm+i)|^{2}(\beta_{i}(m))^{2}
=\sum_{m}|\hat{f}(nm+i)|^{2}(\beta(nm+i))^{2}=\norm{f}_{S_{i}}.
\]
Thus,
%\begin{equation}\label{69}
%\norm{Tf}_{H^{2}(\beta_{i})}^{2}=\norm{f}_{\mathrm{S}_{i}}^{2}-|\hat{f}(i)|^{2}(\beta(i))^{2}+|\hat{f}(i)|^{2}.
%\end{equation}
%Since $\beta(i)=1$, by equation (\ref{69}), we conclude that
$T_{i}$ is isometric, and for a given
$g=\sum_{m}\hat{g}(m)z^{m}\in{L^{2}(\beta_{i})}$, there is an
element
 $f=\sum_{m}\hat{f}(nm+i)z^{nm+i}\in\mathbf{S}_{i}$ such that
 $T_{i}(f)=g$,
 where $\hat{g}(m)=\hat{f}(nm+i)$; that is, $T_{i}$ is unitary.

Since $M_{z}T_{i}=T_{i}M_{i}$, we have
$M_{i}={T_{i}}^{-1}M_{z}T_{i}$. Hence, $M_{i}P_{ii}=P_{ii}M_{i}$
implies that
${T_{i}}^{-1}M_{z}T_{i}P_{ii}=P_{ii}{T_{i}}^{-1}M_{z}T_{i}$ and so
$M_{z}(T_{i}P_{ii}{T_{i}}^{-1})=({T_{i}}P_{ii}{T_{i}}^{-1})M_{z}$.
By Proposition \ref{41},
\begin{equation}\label{62} T_{i}P_{ii}{T_{i}}^{-1}=M_{\varphi_{ii}},\end{equation}
for some $\varphi_{ii}\in{L^{\infty}}(\beta_{i})$. Thus, $P_{ii}$
is unitarily equivalent to the linear transformation
${M_{\varphi_{ii}}}$ for some
$\varphi_{ii}\in{L^{\infty}(\beta_{i})}$.

By Proposition \ref{42}, since $M_{\varphi_{ii}}(0\leq{i}<n)$ is
self-adjoint, for any integers $m$ and
 $p$,
 \begin{equation}\label{67}\hat{\varphi}_{ii}(m-p)=\overline{\hat{\varphi}_{ii}(p-m)},\end{equation}
 and
\begin{equation}\label{68}(M_{\varphi_{ii}}({z^{p}}),z^{m})=(z^{p},M_{\varphi_{ii}}({z^{m}})).\end{equation}
Equations (\ref{67}) and (\ref{68}) imply that
\begin{equation}\label{777}\hat{\varphi}_{ii}(m-p)\beta(m)^{2}
=\overline{\hat{\varphi}_{ii}(p-m)}\beta(p)^{2}=\hat{\varphi}_{ii}(m-p)\beta(p)^{2}.\end{equation}
In equation (\ref{777}), if $m\neq{p}$, without loss of
generality, we assume that $m<{p}$. Then, by equation (\ref{777}),
\[\hat{\varphi}_{ii}(m-p)=\hat{\varphi}_{ii}(m-p)\frac{\beta(p)^{2}}{\beta(m)^{2}}=\hat{\varphi}_{ii}(m-p)
\frac{\beta(m+1)^{2}}{\beta(m)^{2}}\frac{\beta(m+2)^{2}}{\beta(m+1)^{2}}\cdot\cdot\cdot
\frac{\beta(p)^{2}}{\beta(p-1)^{2}}.\] Thus, since
$\lambda_{k}=\frac{\beta(k+1)}{\beta(k)}$ for any $k$,
\begin{equation}\label{101}
\hat{\varphi}_{ii}(m-p)=\hat{\varphi}_{ii}(m-p)\lambda_{m}^{2}\lambda_{m+1}^{2}\cdot\cdot\cdot\lambda_{p-1}^{2}.
\end{equation}
Since $M_z$ is a monotonic-$A_r$ weighted shift, by equation
(\ref{101}), we conclude that $\hat{\varphi}_{ii}(m-p)=0$ if
$p\neq{m}$. Clearly, $\hat{\varphi}_{ii}(0)$ is a real number by
equation (\ref{67}). Thus, $\varphi_{ii}$ is a real-valued
constant function: that is,
\begin{equation}\label{102}M_{\varphi_{ii}}=c_{i}I_{H}\end{equation} for some $c_{i}\in{\mathbb{R}}$. By
equations (\ref{62}) and (\ref{102}), $P_{ii}=M_{\varphi_{ii}}$.

Finally, if $P_{lk}\neq{0}$ for some $0\leq{l}\neq{k}<n$, in the
same way as the proof of Proposition \ref{18}, we have that
$M_{k}$ and $M_{l}$ are unitarily equivalent which is a
contradiction, since the weights are distinct, the weights for
$M_k$ and $M_l$ are completely different. Hence, $M_k$ and $M_l$
can't be unitalily equivalent for any $0\leq{k}\neq{l}<n$ by
Proposition \ref{75}. Therefore, $P_{ij}=0$ if $i\neq{j}$.

\end{proof}

In the next Theorem, we discuss the reducing subspaces of the
bilateral weighted shift operator $M_{z^{n}}$ on $L^{2}(\beta)$
for a monotonic-$A_r$ weighted shift $M_z$. Since $M_{z^{n}}$ and
$M_{z^{n}}|{\mathbf{S}}_{0}\oplus{M_{z^{n}}|{\mathbf{S}}_{1}}\oplus\cdot\cdot\oplus{M_{z^{n}}|{\mathbf{S}}_{n-1}}$
are unitarily equivalent, we have the following result.

\begin{thm}\label{65}
If $M_z$ is a monotonic-$A_r$ weighted shift, then the operator
$M_{z^{n}}:L^{2}(\beta)\rightarrow{L^{2}(\beta)}$ has $2^n$
reducing subspaces for $n\geq{2}$.
\end{thm}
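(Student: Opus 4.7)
The plan is to mimic the proof of Theorem \ref{91} almost verbatim, since all the heavy lifting has been done in Proposition \ref{40}. The key idea is that reducing subspaces correspond bijectively to projections in the commutant, so it suffices to enumerate those projections.

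First, I would invoke the unitary equivalence between $M_{z^{n}}$ on $L^{2}(\beta)$ and the direct sum $M_{0}\oplus M_{1}\oplus\cdots\oplus M_{n-1}$, where $M_{k}=M_{z^{n}}|\mathbf{S}_{k}$. This is built into the orthogonal decomposition $L^{2}(\beta)=\mathbf{S}_{0}\oplus\cdots\oplus\mathbf{S}_{n-1}$ and the fact that each $\mathbf{S}_{k}$ is reducing for $M_{z^{n}}$. So the task reduces to counting the projections in the commutant of this direct sum.

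Next, I would apply Proposition \ref{40} directly. Any projection $P=(P_{ij})_{n\times n}$ commuting with the block diagonal $\operatorname{diag}(M_{0},\ldots,M_{n-1})$ satisfies $P_{ij}=0$ for $i\neq j$, and each diagonal entry $P_{ii}=M_{\varphi_{ii}}$ is multiplication by a real-valued constant $c_{i}$. Hence $P$ itself is a diagonal matrix with real constant entries $c_{0},\ldots,c_{n-1}$ along the diagonal. Imposing $P^{2}=P$ forces $c_{i}^{2}=c_{i}$, so $c_{i}\in\{0,1\}$ for each $i$.

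It follows that the reducing subspaces of $M_{z^{n}}|\mathbf{S}_{0}\oplus\cdots\oplus M_{z^{n}}|\mathbf{S}_{n-1}$ are precisely the subspaces of the form
\[c_{0}\mathbf{S}_{0}\oplus c_{1}\mathbf{S}_{1}\oplus\cdots\oplus c_{n-1}\mathbf{S}_{n-1}, \qquad c_{i}\in\{0,1\},\]
yielding exactly $2^{n}$ choices, with minimal reducing subspaces $\mathbf{S}_{0},\ldots,\mathbf{S}_{n-1}$. Transporting back through the unitary equivalence gives the same count for $M_{z^{n}}$ on $L^{2}(\beta)$.

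There is really no obstacle here: the monotonic-$A_{r}$ hypothesis was consumed inside Proposition \ref{40} (it was what forced distinctness of the weight profiles of the $M_{k}$'s, preventing any off-diagonal intertwiner via Proposition \ref{75}). The proof of this theorem is essentially a bookkeeping corollary of Proposition \ref{40} together with the idempotent equation $c_{i}^{2}=c_{i}$, and should be written in just a few lines paralleling the proof of Theorem \ref{91}.
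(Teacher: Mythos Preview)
Your proposal is correct and follows precisely the paper's approach: the paper treats this theorem as an immediate consequence of Proposition \ref{40} together with the unitary equivalence $M_{z^{n}}\cong M_{0}\oplus\cdots\oplus M_{n-1}$, exactly paralleling the proof of Theorem \ref{91}, and your write-up simply makes explicit the idempotent step $c_{i}^{2}=c_{i}$ that the paper leaves tacit.
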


Although we could state hypothesis for a version of Theorem
\ref{65} in terms of the weights as M. Stessin and K. Zhu
(\cite{S}) do for the case of unilateral weighted shifts, we state
one concrete result which generalizes Theorem \ref{91} and Theorem
\ref{9}.

An operator $T$ is said to be \emph{hyponormal} if
$[T^{*},T]=T^{*}T-TT^{*}\geq{0}$ and a \emph{strict hyponormal} if
$\ker[T^{*},T]=\{0\}$. One concrete application of Theorem
\ref{65} is the Corollary \ref{92}.

\begin{cor}\label{92}
If $M_{z}$ on $L^{2}(\beta)$ is a strict hyponormal operator such
that $\sigma(M_{z})=A_{r}$, then $M_{z^{n}}$ has $2^{n}$ reducing
subspaces for $n\geq{2}$.
\end{cor}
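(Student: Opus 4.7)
The plan is to derive Corollary \ref{92} directly from Theorem \ref{65} by verifying that the hypotheses---strict hyponormality plus $\sigma(M_z)=A_r$---force $M_z$ to be a monotonic-$A_r$ weighted shift in the sense defined earlier in the section.

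First, I would make explicit that $M_z$ on $L^{2}(\beta)$ is a bilateral weighted shift with positive weights $\lambda_{k}=\beta(k+1)/\beta(k)$. Passing to the orthonormal basis $e_{k}=g_{k}/\beta(k)$, a short computation gives $M_{z}e_{k}=\lambda_{k}e_{k+1}$ and $M_{z}^{*}e_{k}=\lambda_{k-1}e_{k-1}$, so that
\[
[M_{z}^{*},M_{z}]\,e_{k}=(\lambda_{k}^{2}-\lambda_{k-1}^{2})\,e_{k}
\qquad (k\in\mathbb{Z}).
\]
Hence hyponormality $[M_{z}^{*},M_{z}]\geq 0$ is equivalent to $\lambda_{k}\geq \lambda_{k-1}$ for every $k$, while the strict hyponormality assumption $\ker[M_{z}^{*},M_{z}]=\{0\}$ upgrades this to the strict inequality $\lambda_{k}>\lambda_{k-1}$ for every $k\in\mathbb{Z}$. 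In particular, the weight sequence is monotonically increasing.

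Second, I would use the standing assumption $\sigma(M_{z})=A_{r}$ to pin down the asymptotics of $\{\lambda_{k}\}$. For a bilateral weighted shift with monotonically increasing positive weights, the outer spectral radius equals $\lim_{k\to\infty}\lambda_{k}$ and the inner spectral radius equals $\lim_{k\to-\infty}\lambda_{k}$. Since $\sigma(M_{z})=\{z:r<|z|<1\}$, we get $\lim_{k\to\infty}\lambda_{k}=1$ and $\lim_{k\to-\infty}\lambda_{k}=r$. Together with the monotonicity established above, this is precisely the data of a monotonic-$A_{r}$ weighted shift.

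Finally, I would invoke Theorem \ref{65} to conclude that $M_{z^{n}}$ has exactly $2^{n}$ reducing subspaces for every $n\geq 2$. The only thing requiring real care, rather than routine verification, is the matching of the spectral-radius limits to the definition of monotonic-$A_{r}$ in the preceding paragraph of the paper; once that identification is made, the corollary is immediate from Theorem \ref{65}, and there is no genuine obstacle.
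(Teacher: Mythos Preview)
Your argument is correct and follows exactly the paper's route: the paper's entire proof is the single observation that $M_z$ is strictly hyponormal if and only if $\lambda_k<\lambda_{k+1}$ for all $k$, after which Theorem \ref{65} applies. You have simply supplied the commutator computation behind that observation and spelled out the spectral-radius identification with the endpoints of $A_r$, which the paper leaves implicit.
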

\begin{proof}
The operator $M_z$ is a strict hyponormal if and only if
$\lambda_{n}<\lambda_{n+1}$ for all $n$.
%Moreover, since the
%weights are distinct, the weights for $M_k$ and $M_l$ are
%absolutely different and hence can't be unitalily equivalent for
%any $0\leq{k}\neq{l}<n$ by Proposition \ref{75}.
\end{proof}

Since all subnormal weighted shift operators which are not
isometric are strictly hyponormal, our earlier Theorems (Theorem
\ref{91} and Theorem \ref{9}) follow from Theorem \ref{65}.

\section{Kernel Function Point of View}
In this section, we also assume that the shift operator $M_{z}$ on
$L^{2}(\beta)$ is invertible. Then, $\sigma(M_{z})$ is the annulus
$A=\{z\in\C:[r(M_{z}^{-1})]^{-1}\leq|z|\leq{r(M_{z})}\}$, where
$r(M_{z})(r(M_{z}^{-1}))$ denotes the spectral radius of
$M_{z}$($M_{z}^{-1}$, respectively) \cite{C}.
 In this section, we focus on the shift operator $M_{z}$ on
$L^{2}(\beta)$ with monotonic weights $\{\lambda_{n}\}$. In this
section, we assume that the weights $\{\lambda_{n}\}$ of $M_{z}$
on $L^{2}(\beta)$ are monotonic satisfying
\[\lim_{n\rightarrow{-\infty}}\frac{\lambda_{n}}{r^n}=1
\texttt{ }and \texttt{
}\lim_{n\rightarrow{\infty}}\lambda_{n}=1.\]

By a \emph{Laurent polynomial} we mean a finite linear combination
of the vectors $\{g_{n}\}(-\infty<n<\infty)$. Recall that for a
complex number $\omega$, $\lambda_{\omega}$ denotes the functional
of evaluation at $\omega$, defined on Laurent polynomials by
$\lambda_{\omega}(p)=p(\omega)$.

\begin{defn}
$\omega$ is said to be a \emph{bounded point evaluation} on
$L^{2}(\beta)$ if the functional $\lambda_{\omega}$ extends to a
bounded linear functional on $L^{2}(\beta)$.
\end{defn}

%\begin{prop}\cite{C}\label{78}
%If $M_z$ is invertible, if
%$[r(M_{z}^{-1})]^{-1}\leq|\omega|\leq{r(M_{z})}$, and if
%$\phi\in{L^{\infty}(\beta)}$, then the Laurent series $\phi$
%converges at $\omega$, and $|\phi(\omega)|\leq\norm{M_{\phi}}$.
%\end{prop}

In this section, the hypotheses on the weights imply that every
point $\omega$ in $A_{r}$ is a bounded point evaluation. Thus, we
have the \emph{reproducing kernel} $k_{\omega}$ for $L^{2}(\beta)$
associated with the point $\omega\in{A_{r}}$.

\begin{lem}\label{100}
  If $M_{F}:\mathbf{S}_{k}\rightarrow{L^{2}(\beta)}$ is a (bounded)
multiplication operator by a function $F$ on $A_r$, then $F
\in{H^{\infty}(A_{r})}$ and $\norm{F}_{\infty}\leq\norm{M_{F}}$.

\end{lem}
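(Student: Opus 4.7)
The statement is the bilateral weighted shift analogue of Lemma~\ref{17}, so my plan is to mimic that proof, replacing the Bergman kernel by the reproducing kernel $k_\omega$ of $L^{2}(\beta)$ whose existence has just been recorded via the bounded point evaluation hypothesis on the weights $\{\lambda_n\}$.

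First I would establish that $F$ is meromorphic on $A_r$. The subspace $\mathbf{S}_k$ contains the monomial $z^k$ (here $0\le k<n$), and since every point of $A_r$ is a bounded point evaluation on $L^{2}(\beta)$, every element of $L^{2}(\beta)$ defines a holomorphic function on $A_r$. Hence $M_F(z^k)\in L^{2}(\beta)$ is holomorphic on $A_r$, and because $z^k$ does not vanish on $A_r$ (as $0\notin A_r$), the identity
\[
F(z)=\frac{(M_Fz^k)(z)}{z^k}
\]
exhibits $F$ as a quotient of two holomorphic functions on $A_r$, so $F$ is meromorphic there.

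Next I would obtain the pointwise bound. Fix $z\in A_r$ and let $\lambda_z$ denote the evaluation functional on $L^{2}(\beta)$, which by hypothesis is bounded with norm $\|k_z\|$. Its restriction to the closed subspace $\mathbf{S}_k$ is still a bounded functional, and it is nonzero because $\lambda_z(z^k)=z^k\neq 0$. For any $f_k\in\mathbf{S}_k$,
\[
|F(z)\lambda_z(f_k)|=|F(z)f_k(z)|=|\lambda_z(M_Ff_k)|\le\|\lambda_z\|\,\|M_F\|\,\|f_k\|.
\]
Taking the supremum over $f_k$ in the unit ball of $\mathbf{S}_k$, the left side becomes $|F(z)|\cdot\|\lambda_z|_{\mathbf{S}_k}\|$, so after cancelling the nonzero factor $\|\lambda_z|_{\mathbf{S}_k}\|$ I conclude $|F(z)|\le\|M_F\|$.

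Finally, since $|F|$ is bounded on $A_r$, the meromorphic function $F$ has no poles, hence is holomorphic on $A_r$; together with the pointwise bound this gives $F\in H^\infty(A_r)$ with $\|F\|_\infty\le\|M_F\|$. The only real point to verify carefully is that $\lambda_z|_{\mathbf{S}_k}\neq 0$, which is immediate from the presence of the nonvanishing monomial $z^k$ in $\mathbf{S}_k$; the rest is a routine transcription of the Bergman argument into the $L^{2}(\beta)$ setting.
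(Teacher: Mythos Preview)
Your proof is correct and is exactly the argument the paper intends: the paper's own proof of this lemma consists of the single sentence ``Since every point $\omega$ in $A_{r}$ is a bounded point evaluation, it is proven in the same way as in Lemma~\ref{17},'' and your write-up is precisely that transcription. If anything, you are slightly more careful than the paper's Lemma~\ref{17}, correctly distinguishing $\|\lambda_z|_{\mathbf{S}_k}\|$ from $\|\lambda_z\|$ and noting why the former is nonzero.
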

\begin{proof}
Since every point $\omega$ in $A_{r}$ is a bounded point
evaluation, it is proven in the same way as in Lemma \ref{17}.
\end{proof}
\begin{prop}
A bounded linear operator $T$ on $L^{2}(\beta)$ commutes with
$M_{z^{n}}$ if and only if there are functions
$\phi_{i}(0\leq{i}<n)$ in $H^{\infty}(A_{r})$ such that
\begin{equation}\label{79}Tf=\sum_{i=0}^{n-1}{\phi_{i}f_{i}},\end{equation} where
$f_{i}(0\leq{i}<n)$ denotes the functions in equation (\ref{70}).
\end{prop}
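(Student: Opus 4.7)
The plan is to copy the proof of Proposition \ref{90} essentially verbatim, with the reproducing kernel $k_{\omega}$ provided by the bounded-point-evaluation hypothesis of this section playing the role of the Bergman kernel, and with Lemma \ref{100} replacing Lemma \ref{17}.

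For the easy direction $(\Leftarrow)$, if $Tf = \sum_{i=0}^{n-1} \phi_{i} f_{i}$ with $\phi_{i} \in H^{\infty}(A_{r})$, then each $\mathbf{S}_{i}$ is reducing for $M_{z^{n}}$ and multiplication by the analytic function $\phi_{i}$ commutes with multiplication by $z^{n}$ on $\mathbf{S}_{i}$, so $TM_{z^{n}} = M_{z^{n}}T$. Boundedness of $T$ follows by summing the operator norms of the multipliers $M_{\phi_{i}}$, exactly as in equation (\ref{12}).

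For $(\Rightarrow)$, assume $TM_{z^{n}} = M_{z^{n}}T$, so $T^{*}$ commutes with $M_{\lambda^{n}-z^{n}}^{*}$ and therefore preserves $\ker M_{\lambda^{n}-z^{n}}^{*}$ for every $\lambda \in A_{r}$. Since $M_{\phi}^{*} k_{\omega} = \overline{\phi(\omega)}\, k_{\omega}$ for any $\phi \in H^{\infty}(A_{r})$, this kernel is spanned by $\{k_{\lambda\omega_{k}} : 0 \le k < n\}$ with $\omega_{k} = \exp(2\pi i k/n)$ (all the roots $\lambda\omega_{k}$ lie in $A_{r}$ because $|\lambda\omega_{k}| = |\lambda|$). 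Expanding
\[T^{*}k_{\lambda} = \sum_{k=0}^{n-1} \overline{a_{k}(\lambda)}\, k_{\lambda\omega_{k}}\]
and applying the reproducing property gives
\[(Tf)(z) = \langle f, T^{*}k_{z} \rangle = \sum_{k=0}^{n-1} a_{k}(z)\, f(z\omega_{k}).\]
Decomposing $f = f_{0} + \cdots + f_{n-1}$ as in (\ref{70}) and using $f_{i}(z\omega_{k}) = \omega_{k}^{i} f_{i}(z)$ (immediate from the definition of $\mathbf{S}_{i}$ together with $\omega_{k}^{n} = 1$), a regrouping yields $Tf = \sum_{i=0}^{n-1} \phi_{i} f_{i}$ with $\phi_{i}(z) = \sum_{k=0}^{n-1} a_{k}(z)\, \omega_{k}^{i}$. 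Taking $f = z^{i} \in \mathbf{S}_{i}$ gives $T(z^{i}) = \phi_{i} z^{i}$, so $\phi_{i} = T(z^{i})/z^{i}$ is a ratio of holomorphic functions on $A_{r}$ (elements of $L^{2}(\beta)$ are genuinely holomorphic on $A_{r}$ by the point-evaluation hypothesis) with nonvanishing denominator; hence $\phi_{i}$ is analytic on $A_{r}$, and the bound $\|\phi_{i}\|_{\infty} \le \|M_{\phi_{i}}\|$ follows from Lemma \ref{100} applied to $M_{\phi_{i}}\colon \mathbf{S}_{i} \to L^{2}(\beta)$.

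The principal obstacle is the spanning assertion for $\ker M_{\lambda^{n}-z^{n}}^{*}$: the containment $\supseteq$ is the trivial kernel computation above, but the reverse containment requires that $M_{\lambda^{n}-z^{n}}$ is Fredholm of index $-n$ for each $\lambda \in A_{r}$. This is standard for bilateral weighted shifts with annular spectrum, by factoring $\lambda^{n} - z^{n} = \prod_{k=0}^{n-1}(\lambda\omega_{k} - z)$ and using the Fredholm-index count for $M_{z} - \mu$ on the interior of $\sigma(M_{z}) = A_{r}$; it parallels the unproved analogue used in Proposition \ref{90}. Once this is in hand, the remainder of the argument is mechanical bookkeeping.
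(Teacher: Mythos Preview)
Your proposal is correct and follows exactly the approach the paper takes: the paper's own proof simply says ``in the same way as in Proposition \ref{90}'' and then invokes Lemma \ref{100} in place of Lemma \ref{17}, which is precisely what you do. You are in fact more careful than the paper in flagging the Fredholm-index justification for $\dim\ker M_{\lambda^{n}-z^{n}}^{*}=n$, which the paper leaves implicit in both places.
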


\begin{proof}
In the same way as in Proposition \ref{90}, we have analytic
functions $\phi_{i}(0\leq{i}<{n})$ on $A_{r}$ satisfying equation
(\ref{79}).

Since $\phi_{i}(z)=\frac{T(z^{i})}{z^{i}}$ for $0\leq{i}<{n}$, by
Lemma \ref{100}, $\phi_{i}\in{H^{\infty}(A_{r})}$.

\end{proof}

\begin{prop}
 For $0\leq{k}<n$, let
$M_{k}=M_{z^{n}}|\mathbf{S}_{k}$.

If $B=(B_{ij})_{(n\times{n})}$ is a projection such that
\begin{equation}\label{21}
\begin{pmatrix}
M_{0}&0&\cdot\cdot\cdot&0&0\\0&M_{1}&0&\cdot\cdot&0\\\cdot&\cdot&\cdot&\cdot&\cdot\\0&0&
\cdot\cdot\cdot&0&M_{n-1}\end{pmatrix}B=B\begin{pmatrix}
M_{0}&0&\cdot\cdot\cdot&0&0\\0&M_{1}&0&\cdot\cdot&0\\\cdot&\cdot&\cdot&\cdot&\cdot\\0&0&
\cdot\cdot\cdot&0&M_{n-1}\end{pmatrix},\end{equation} then there
are holomorphic functions $\varphi_{ij}(0\leq{i,j}<n)$ in
$H^{\infty}(A_{r})$ such that
\[B_{ij}=M_{\varphi_{ij}}.\]

Moreover, $\varphi_{ii}$ is a real-valued constant function on
$A_r$ for $0\leq{i}<n$, and $\varphi_{ij}\equiv{0}$ for
$i\neq{j}$.
\end{prop}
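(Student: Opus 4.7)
The plan is to mirror the proof of Proposition \ref{18}, replacing references to the Bergman kernel with the monotonic-$A_r$ reproducing kernel setup established just above and invoking Lemma \ref{100} and Proposition \ref{75} in place of Lemma \ref{17} and Lemma \ref{39}. Since $B$ commutes with the block-diagonal operator $\operatorname{diag}(M_0,\dots,M_{n-1})$, which is unitarily equivalent to $M_{z^n}$, the preceding proposition applies: there exist $\phi_0,\dots,\phi_{n-1}\in H^\infty(A_r)$ with $Bf=\sum_{i=0}^{n-1}\phi_i f_i$. Writing $\phi_i(z)=\sum_{k=0}^{n-1}a_k(z)\omega_k^i$ and decomposing each $a_k=\sum_j a_{kj}$ with $a_{kj}\in\mathbf{S}_j$ (exactly as in Proposition \ref{18}), the matrix of $B$ with respect to the block decomposition $L^2(\beta)=\bigoplus_i\mathbf{S}_i$ takes the form $B_{ij}=M_{\varphi_{ij}}$ with $\varphi_{ij}:\mathbf{S}_j\to\mathbf{S}_i$ a multiplication operator. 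An appeal to Lemma \ref{100} then places every $\varphi_{ij}$ in $H^\infty(A_r)$.

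Next I would extract the structure of the diagonal blocks. Because $B$ is a projection, each $B_{ii}=M_{\varphi_{ii}}:\mathbf{S}_i\to\mathbf{S}_i$ is self-adjoint, so $M_{\varphi_{ii}}^*=M_{\varphi_{ii}}$ forces $\varphi_{ii}$ to be a real-valued holomorphic function on the connected annulus $A_r$; such a function must be constant. Hence $B_{ii}=c_iI_{\mathbf{S}_i}$ for some $c_i\in\mathbb{R}$.

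For the off-diagonal entries I would argue by contradiction. Suppose $\varphi_{lk}\not\equiv 0$ for some $l\neq k$. The commutation relation from (\ref{21}) gives $M_l M_{\varphi_{lk}}=M_{\varphi_{lk}}M_k$, and since $B=B^*$ we have $M_{\varphi_{kl}}=M_{\varphi_{lk}}^*$. Chaining these produces $M_l(M_{\varphi_{lk}}M_{\varphi_{kl}})=(M_{\varphi_{lk}}M_{\varphi_{kl}})M_l$, and $M_{\varphi_{lk}}M_{\varphi_{kl}}:\mathbf{S}_l\to\mathbf{S}_l$ is a positive self-adjoint multiplication operator commuting with $M_l$. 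Repeating the diagonal argument on $\mathbf{S}_l$, this composition must be a strictly positive constant multiple of the identity, $M_{\varphi_{lk}}M_{\varphi_{kl}}=c_l I_{\mathbf{S}_l}$ with $c_l>0$. Together with the intertwining $M_l M_{\varphi_{lk}}=M_{\varphi_{lk}}M_k$, a normalization of $M_{\varphi_{lk}}$ then furnishes a unitary equivalence between $M_k$ and $M_l$.

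The last step, and the one that uses the standing monotonic-$A_r$ hypothesis, is to rule out that unitary equivalence. Passing through the unitaries $T_i:\mathbf{S}_i\to L^2(\beta_i)$ of Lemma \ref{3} (with $\beta_i(m)=\beta(nm+i)$), the operators $M_i$ and $M_j$ are unitarily equivalent iff the shifts on $L^2(\beta_i)$ and $L^2(\beta_j)$ are; by Proposition \ref{75} this requires the weight sequences $\{\lambda_{nm+i}\}_m$ and $\{\lambda_{nm+j}\}_m$ to coincide up to a common integer shift. Monotonicity of $\{\lambda_n\}$ together with the asymptotics $\lambda_n\to 1$ as $n\to\infty$ and $\lambda_n/r^n\to 1$ as $n\to-\infty$ forces the full sequence $\{\lambda_n\}$ to be strictly monotonic, so no nontrivial reindexing of the subsequence $\{\lambda_{nm+i}\}_m$ agrees with $\{\lambda_{nm+j}\}_m$ when $i\neq j$. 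This contradiction shows $\varphi_{ij}\equiv 0$ for $i\neq j$, completing the proof. The main obstacle is precisely this last step: unlike in Proposition \ref{18}, where Lemma \ref{39} was available, here one must extract strict monotonicity of the weights from the monotonic-$A_r$ conditions and feed it into Proposition \ref{75}.
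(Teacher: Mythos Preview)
Your proposal is correct and follows essentially the same route as the paper. The paper's own proof is just a two-line sketch: it notes that monotonicity of $\{\lambda_n\}$ together with Proposition~\ref{75} forces $M_k$ and $M_l$ to be non-unitarily equivalent for $k\neq l$, and then invokes the argument of Proposition~\ref{18} verbatim; you have simply unpacked those steps and added the explicit Lemma~\ref{100} reference, which is exactly what the paper intends.
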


\begin{proof}
Since the the weight $\{\lambda_{n}\}$ of $M_{z}$ on
$L^{2}(\beta)$ is monotonic, by Proposition \ref{75}, $M_{k}$ and
$M_{l}$ are not unitarily equivalent for any $0\leq{k}\neq{l}<n$.
Thus, by the same way in Proposition \ref{18}, it is proven.

\end{proof}

\begin{thm}\label{66}
For $0\leq{i}<n$, let $M_{i}=M_{z^{n}}|\mathbf{S}_{i}$. Then the
bilateral weighted shift operator
${M}_{z^{n}}:L^{2}(\beta)\rightarrow{L^{2}(\beta)}$ has $2^n$
reducing subspaces for $n\geq{2}$.
\end{thm}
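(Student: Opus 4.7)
The plan is to follow the template of Theorem~\ref{91} verbatim, feeding in the two propositions just established in this section in place of their Bergman-space counterparts. Because $M_{z^{n}}$ is unitarily equivalent to $M_{0}\oplus M_{1}\oplus\cdots\oplus M_{n-1}$ via the orthogonal decomposition $L^{2}(\beta)=\mathbf{S}_{0}\oplus\cdots\oplus\mathbf{S}_{n-1}$, it suffices to enumerate the projections in the commutant of this direct sum; each such projection corresponds to one reducing subspace of $M_{z^{n}}$.

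First, I would invoke the preceding Proposition to reduce any such projection $B=(B_{ij})_{i,j=0}^{n-1}$ to a diagonal matrix whose $(i,i)$ entry has the form $M_{\varphi_{ii}}$ with $\varphi_{ii}$ a real-valued constant on $A_{r}$. Writing $\varphi_{ii}\equiv c_{i}\in\mathbb{R}$, the matrix takes the block-diagonal form $B=\mathrm{diag}(c_{0}I_{\mathbf{S}_{0}},\ldots,c_{n-1}I_{\mathbf{S}_{n-1}})$. Next, the projection identity $B^{2}=B$ forces $c_{i}^{2}=c_{i}$, so each $c_{i}\in\{0,1\}$. Distinct $n$-tuples $(c_{0},\ldots,c_{n-1})\in\{0,1\}^{n}$ yield distinct projections, producing exactly $2^{n}$ reducing subspaces of the form
\[
c_{0}\mathbf{S}_{0}\oplus c_{1}\mathbf{S}_{1}\oplus\cdots\oplus c_{n-1}\mathbf{S}_{n-1},
\]
with the $n$ minimal elements being $\mathbf{S}_{0},\ldots,\mathbf{S}_{n-1}$.

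The only step that truly requires care is the appeal to the preceding Proposition, whose nontriviality rests on Proposition~\ref{75} to rule out unitary equivalence of $M_{k}$ and $M_{l}$ for $k\neq l$. Under the monotonicity hypothesis on $\{\lambda_{n}\}$ imposed throughout this section, the weight sequences of $M_{k}$ and $M_{l}$ genuinely differ in the sense required by Proposition~\ref{75}, so the off-diagonal vanishing $\varphi_{ij}\equiv 0$ for $i\neq j$ is forced. Once this has been absorbed from the preceding Proposition, the passage to the theorem is a purely formal combinatorial count of $\{0,1\}$-valued diagonal matrices, so I do not expect any substantive obstacle beyond verifying that the preceding Proposition is indeed applicable in the current kernel-function setting.
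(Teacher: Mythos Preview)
Your proposal is correct and follows exactly the same route as the paper: the paper's proof of Theorem~\ref{66} consists of the single sentence ``In the same way in Theorem \ref{91}, it is proven,'' and your write-up simply spells out that template, invoking the preceding Proposition (the Section~5 analogue of Proposition~\ref{18}) to force any commuting projection to be diagonal with real constant entries, then using $B^{2}=B$ to obtain $c_{i}\in\{0,1\}$.
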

\begin{proof}
In the same way in Theorem \ref{91}, it is proven.
\end{proof}

%Recall that if the \emph{weights} $w_{n}=\beta(n+1)/\beta(n)$
%$(n\in{\mathbb{Z}})$ are strictly monotonic, then the condition
%that no two of the $M_{i}(i=0,1,2,\cdot\cdot\cdot,n-1)$ are
%unitarily equivalent would be satisfied (\cite{C}). Thus, by
%Theorem \ref{66}, we have the following Corollary;

%\begin{cor}\label{71}
%For $i=0,1,2,\cdot\cdot\cdot,n-1$, let
%$M_{i}=M_{z^{n}}|\mathbf{S}_{i}$ and
%$\sigma(M_{z})={\overline{A_{r}}}$.

%If the weights $w_{n}=\beta(n+1)/\beta(n)$ $(n\in{\mathbb{Z}})$
%are strictly monotonic, then the bilateral shift operator
%$M_{z^{n}}:L^{2}(\beta)\rightarrow{L^{2}(\beta)}$ has $2^n$
%reducing subspaces for $n\geq{2}$.

%\end{cor}

%The fact that the behavior for the Toeplitz operators $T_{z^{n}}$
%on the Hardy space $H^{2}(\mathbb{D})$ can be viewed as due to the
%fact we have equal weights. Actually, in this case they are all
%one.

\section{A Complex Geometric Point of Views}
The adjoint of a hyponormal weighted shift with spectrum equal to
the closure of $A_r$ for $0<{r}<1$ and essential spectrum equal to
$\partial{A_{r}}$ belongs to a very special class of operators,
$B_{1}(A_{r})$. Recall that, for a bounded domain $\Omega$ in $\C$
and a positive integer $n$, the $B_{n}(\Omega)$-class was
introduced by M.Cowen and the first author in \cite{CD} and
consists of those bounded operators on a Hilbert space $H$ that
satisfy;\vskip.5cm

(1) ran ${(T-\omega)}$ is closed for $\omega\in\Omega$,

(2) $\dim\ker{(T-\omega)}=n$ for $\omega\in\Omega$, and

(3) $\bigvee_{\omega\in\Omega}\ker(T-\omega)=H$.\vskip.2cm

The operators $M_{z}^{*}$ and $T_{z}^{*}$ as well as the adjoints
of the bilateral weighted shifts $M_{z^{*}}$ defined in the
previous sections with $\sigma(M_{z})=\overline{A_{r}}$ and
$\sigma_{e}=\partial{A_{r}}$ belong to $B_{1}(A_{r})$, while their
$n$th powers, $M_{z^{n}}^{*}$, $T_{z^{n}}^{*}$ and
$M_{z^{n}}^{*}$, belong to $B_{n}(A_{r^{n}})$.

All operators $T$ in $B_{1}(A_{r})$ have a kernel function, $k_z$,
and $\ker{(T^{n}-\omega)}$ is the span of
$\Gamma_{\omega}=\{k_{\lambda{\omega_{k}}}:\omega_{k}=\texttt{exp}(2\pi{i}k/n)(0\leq{k}<n)\}$,
where $\overline{\lambda}^{n}=\omega$. Thus, a holomorphic frame
for the hermitian holomorphic bundle $E_{T}$ canonically defined
by $T^{n}$ is given by the sums of the appropriate functions in
$\lambda$ analogous to the subspace decomposition into powers of
$z$, $z^k$, where $k\equiv{i}$( modulo $n$), and $0\leq{i}<n$. In
the general case, these sections don't correspond to reducing
subspaces since these sections being pairwise orthogonal can be
shown to be equivalent to $T$ being a weighted shift.

Operators in the commutant of $T^n$ correspond to anti-holomorphic
bundle maps, which have a matrix representation once a
anti-holomorphic frame is chosen for $E_T$. That is what was
accomplished as a first step in the earlier sections. Reducing
subspaces correspond to projection-valued anti-holomorphic bundle
maps and are determined by the value at a single point. Again,
that is the result proved in each of the three cases in which the
bundle $E_T$ is presented as the orthogonal direct sum of $n$
anti-holomorphic line bundles.

The question of whether there are other reducing subspaces is
equivalent to the issue of representing this bundle as a different
orthogonal direct sum. These bundles all have canonical Chern
connections and hence a corresponding curvature. The fact that the
operators obtained by restricting $T^n$ to one of these reducing
subspaces corresponds to the fact that the curvature has distinct
eigenvalues at some point in $A_r$. This is a straight calculation
in the case of the disk but much less so for the annulus.

If we take a general $T$ in $B_{1}(A_{r})$, it seems that the
lattice of reducing subspaces has $2^{k}$ elements for some
$0<k\leq{n}$. That is the case for Toeplitz operators on the Hardy
space $H^{2}_{\omega}(A_{r})$, where $\omega\in{A_{r}}$ and the
measure used to define $H^{2}_{\omega}(A_{r})$ is harmonic measure
on $A_r$ for the point $\omega$. It is not clear just how to
settle the general case, however, since calculating the curvature
is probably not feasible. (Note in this case $T_z$ is not a
bilateral weighted shift.) Thus, we need to develope other
techniques to settle this question.

A more general question concerns operators $T$ in $B_{n}(\Omega)$
for more general $\Omega$. For $T_{z}\otimes{I_{\C^{n}}}$ on
$H^{2}(D)\otimes{\C^{n}}$, the lattice of reducing subspaces is
continuous and infinite with no discrete part. Does this happen
for any other examples besides $T_{\varphi}\otimes{I_{\C^{n}}}$
where $\varphi$ is in $H^{\infty}(\Omega)$ ?

%\bibliography{xbib}

%------------------------------------------------------------------------

\bibliographystyle{amsplain}
%\bibliography{xbib}
%\address{Department of Mathematics, University of Toledo, Toledo,
%OH 43606-3390} \email{Yun-Su.Kim@utoledo.edu}
%\address{Department of Mathematics, Texas A&M University, College Station,
 %Texas 77843-3368} \email{Yun-Su.Kim@utoledo.edu}

\end{document}